 \newcommand{\pend}{\hfill \thicklines \framebox(5.5,5.5)[l]{}}
 \newenvironment{proof}{\noindent {\sc  Proof.} \rm}{\pend}
\numberwithin{equation}{section}
 \newtheorem{lemma}{Lemma}[section]
 \newtheorem{proposition}{Proposition}[section]
 \newtheorem{remark}{Remark}[section]
 \newtheorem{definition}{Definition}[section]
\begin{document}
 \pagenumbering{arabic} \thispagestyle{empty}
\setcounter{page}{1}

\begin{center}
\begin{center}{\Large{\bf Tail Asymptotics for a Retrial Queue with Bernoulli Schedule}}

\vskip 1cm

 { Bin Liu 
 \\ {\small School of Mathematics \& Physics, Anhui Jianzhu University,
Hefei 230601, P.R. China, \\
bliu@amt.ac.cn}\\

\

Yiqiang Q. Zhao

{\small School of Mathematics and Statistics,
Carleton University, Ottawa, ON, Canada K1S 5B6,\\
zhao@math.carleton.ca}}
\end{center}

\date{\today}

\end{center}

\begin{abstract}
In this paper, we study the asymptotic behaviour of the tail probability of the number of customers in the steady-state $M/G/1$ retrial queue with Bernoulli schedule, under the assumption that the service time distribution has a regularly varying tail. Detailed tail asymptotic properties are obtained for the conditional probability of the number of customers in the (priority) queue and orbit, respectively.
\medskip

\noindent \textbf{Keywords:} $M/G/1$ retrial queue, Bernoulli schedule, Number of customers,
Asymptotic tail probability, Regularly varying distribution.
\end{abstract}

\noindent \textbf{Mathematics Subject Classification (2010):} 60K25; 60G50; 90B22.

\

\section{Introduction}

As one of the important types of queueing systems, retrial queues have been extensively studied for more than 40 years and have produced more than a 100 literature publications. Research on retrial queues is still very active and new challenges are continuing to emerge.
A general picture of retrial models, together with their applications in various areas, and basic results on retrial queues can be acquired from books or recent surveys, such as
Falin and Templeton~\cite{Falin-Templeton:1997},
Artajelo and G\'{o}mez-Corral~\cite{Artalejo-GomezCorral:2008},
Choi and Chang~\cite{Choi-Chang:1999},
Kim and Kim~\cite{Kim-Kim:2016}, Phung-Duc~\cite{Phung-Duc:2017}, among possible others.

Tail asymptotic analysis of retrial queueing systems, especially asymptotic properties in tail stationary probabilities for a stable retrial queue, have been a focus of the investigation in the past 10 years or so, due to two main reasons: first, for most of retrial queues, they are not expected to have explicit non-Laplace transform solutions for their stationary distributions, and second, tail asymptotic properties often lead to approximations to performance metrics and numerical algorithms. Both light-tailed and heavy-tailed properties have been obtained for a number of retrial queues, including the following incomplete list: Kim, Kim and Ko~\cite{Kim-Kim-Ko:2007},
Liu and Zhao~\cite{Liu-Zhao:2010},
Kim, Kim and Kim~\cite{Kim-Kim-Kim:2010a},
Liu, Wang and Zhao~\cite{Liu-Wang-Zhao:2012},
Kim and Kim~\cite{Kim-Kim:2012},
Artalejo and Phung-Duc~\cite{Artalejo-PhungDuc:2013},
Walraevens, Claeys and Phung-Duc~\cite{Walraevens-Claeys-Phung-Duc:2018},
Kim, Kim and Kim~\cite{Kim-Kim-Kim:2010c},
Yamamuro~\cite{Yamamuro:2012},
Liu, Wang and Zhao~\cite{Liu-Wang-Zhao:2014},
Masuyama~\cite{Masuyama:2014},
Liu, Min and Zhao~\cite{Liu-Min-Zhao:2017}, and
Liu and Zhao~\cite{Liu-Zhao:2017b}.

In this paper, we consider an $M/G/1$ retrial queue with Bernoulli schedule. This model was first proposed and studied by Choi and Park~\cite{Choi-Park:1990}.
Specifically, this $M/G/1$ retrial queueing system has a (priority) queue of infinite waiting capacity and an orbit.  External customers arrive to this system according to a Poisson process with rate $\lambda$. There is a single server in this system. If the server is idle upon the arrival of a customer, the customer receives the service immediately and leaves the system after the completion of the service. Otherwise, if the server is busy, the arriving customer would join the queue with probability $q$, becoming a priority customer waiting for the service according to the first-in-first-out discipline; or the orbit with probability $p=1-q$, becoming a repeated customer who will retry later for receiving its service. A priority customer has priority over a repeated customer for receiving the service. This implies that upon the completion of a service, if there is a customer in the queue, the server will serve the customer at the head of the queue; otherwise the server becomes idle. Each of the repeated customers in the orbit independently repeatedly tries to receive service, according to a Poisson process with the retrial rate $\mu$, until they find an idle server. Once an idle server is found, they immediately receive the service and, after the completion of the service, they leave the sytsem. All customers receive the same service time whose distribution $F_{\beta}(x)$ with $F_{\beta}(0)=0$ is assumed to have a finite mean $\beta_1$. The Laplace-Stieltjes transform (LST) of the distribution function $F_{\beta}(x)$ is denoted by $\beta(s)$.

Closely related to the model studied in \cite{Choi-Park:1990} and also in this paper, are
Falin, Artalejo and Martin~\cite{Falin-Artalejo-Martin:1993}, in which a model with two independent (primary) Poisson arrival streams was considered. The priority customers, when blocked upon arrival, are queued and waiting for service, while the non-priority customers, when blocked, join the orbit and retry for service later;
Li and Yang~\cite{Li-Yang:1998}, in which the discrete-time counterpart to the model studied in this paper, or a Geo/G/1 retrial queue with Bernoulli schedule, was considered;
and Atencia and Moreno~\cite{Atencia-Moreno:2005}, in which the $M/G/1$ retrial queueing system with Bernoulli schedule has a general retrial time, but only the customer at the head of the orbit is allowed to retry for service, or retrials with a constant rate.

Let $\lambda_1=\lambda q$, $\lambda_2=\lambda p$, $\rho_1=\lambda_1\beta_1$, $\rho_2=\lambda_2\beta_1$ and $\rho=\rho_1+\rho_2=\lambda\beta_1$. It follows from \cite{Choi-Park:1990} that the system considered in this paper is stable if and only if (iff) $\rho<1$, which is assumed to be true throughout the paper.
Our focus is on asymptotic properties for various stationary tail probabilities of the $M/G/1$ retrial queue with Bernoulli schedule, which have not been studied before.
The starting point of our study is based on two expressions for probability transformations obtained in \cite{Choi-Park:1990}. Stochastic decompositions are tools in our analysis.
By assuming a regular varying tail in the service time distribution, we obtain, in this paper, asymptotic properties for the conditional tail probabilities of customers:
(1) in the orbit, given that the server is idle (Section~\ref{R0+Mc});
(2) in the queue, given that the server is busy (Section~\ref{sec:4.2});
(3) in the orbit, given that the server is busy (Section~\ref{sec:4.3});
(4) in the orbit, given that the server is busy and the queue is empty (Section~\ref{sec:4.6}).

The rest of this paper is organized as follows:
preliminary results are provided in Section~\ref{sec:2};
stochastic decompositions are obtained in Section~\ref{sec:3};
and the main results, asymptotic properties for tail probabilities, are derived in Section~\ref{sec:4}.

\section{Preliminary} \label{sec:2}

Assume that the system is in steady state. Let $R_{que}$ be the number of priority customers in the queue, {\it excluding} the possible one in the service, let $R_{orb}$ be the number of repeated customers in the orbit, and let $I_{ser}=1$ or 0, whenever the server is busy or idle, respectively. Let $R_0$ be a random variable (r.v.) whose distribution coincides with the conditional distribution of $R_{orb}$, given that $I_{ser}=0$, and let $(R_{11},R_{12})$ be a two-dimensional r.v. whose distribution coincides with the conditional distribution of $(R_{que},R_{orb})$, given that $I_{ser}=1$. Precisely, $R_0$, $R_{11}$ and $R_{12}$ are all nonnegative integer-valued r.v.s; $R_0$ has the probability generating function (PGF): $R_0(z_2)=E(z_2^{R_0})\overset{\footnotesize\mbox{def}}{=}E(z_2^{R_{orb}}|I_{ser}=0)$ and $(R_{11},R_{12})$ has the PGF: $R_1(z_1,z_2)=E(z_1^{R_{11}}z_2^{R_{12}})\overset{\footnotesize\mbox{def}}{=}E(z_1^{R_{que}}z_2^{R_{orb}}|I_{ser}=1)$.

Our starting point for tail asymptotic analysis is based on the expressions for $R_0(z_2)$ and $R_1(z_1,z_2)$. Following the discussions in~\cite{Choi-Park:1990}, let
\begin{eqnarray}
M_{a}(z_1,z_2)&=&\frac 1 {\rho}\cdot\frac {1-\beta(\lambda-\lambda_1 z_1-\lambda_2 z_2)} {1- p z_2- q z_1},\label{FalinG1}\\
M_{b}(z_1,z_2)&=&(1-\rho_1)\cdot \frac {h(z_2)-z_1} {\beta(\lambda-\lambda_1 z_1-\lambda_2 z_2)-z_1},\label{FalinG2}\\
M_{c}(z_2)&=&\frac {1-\rho} {1-\rho_1}\cdot\frac {1- z_2} {h(z_2)-z_2},\label{FalinG3}
\end{eqnarray}
where $h(\cdot)$ is determined uniquely by the following equation:
\begin{equation}
h(z)=\beta(\lambda-\lambda_1 h(z)-\lambda_2 z). \label{Falin-phi-eqn}
\end{equation}
Since $P\{I_{ser}=0\}=1-\rho$ and $P\{I_{ser}=1\}=\rho$, are obtained in \cite{Choi-Park:1990}, we have the following expressions immediately from equations (12) and (13) in~\cite{Choi-Park:1990}:
\begin{eqnarray}
R_0(z_2)&=&\exp\left\{-\frac {\lambda} {\mu}\int_{z_2}^1\frac {1-h(u)} {h(u)-u} du\right\},\label{FalinD0}\\
R_1(z_1,z_2)&=&M_a(z_1,z_2)\cdot M_b(z_1,z_2)\cdot M_c(z_2)\cdot R_0(z_2).\label{FalinD1}
\end{eqnarray}

In the next section, we will prove that $R_0(z_2)$, $M_c(z_2)$, $M_a(z_1,z_2)$ and $M_b(z_1,z_2)$ can be viewed as the PGFs of four one or two-dimensional r.v.s.

Next, we provide a probabilistic interpretation for $h(z)$.
Let $T_{\alpha}$ be the busy period of the standard $M/G/1$ queue with arrival rate $\lambda_1$ and service time $T_{\beta}$.
By $F_{\alpha}(x)$, we denote the probability distribution function of $T_{\alpha}$, and by $\alpha (s)$, the LST of $F_{\alpha}(x)$.
The following are classic results on the busy period of the standard $M/G/1$ queue (e.g., \cite{Meyer-Teugels:1980}):
\begin{eqnarray}
\alpha(s)&=&\beta(s+\lambda_1- \lambda_1 \alpha(s)),\label{busy-eqn-alpha}\\
\alpha_1&\stackrel{\rm def}{=}&E(T_{\alpha})=\beta_1/(1-\rho_1).\label{busy-exp-T-alpha}
\end{eqnarray}

Throughout this paper, we will use the notation $N_b(t)$ to represent the number of Poisson arrivals, with rate $b$, within the time interval $(0,t]$,
and $N_{\lambda_2}(T_{\alpha})$ to represent the number of arrivals of a Poisson process, with arrival rate $\lambda_2$, within the independent random time  $T_{\alpha}$. The PGF of $N_{\lambda_2}(T_{\alpha})$ is easily obtained as follows:
\begin{equation}
E(z^{N_{\lambda_2}(T_{\alpha})})=\int_0^{\infty}\sum_{n=0}^{\infty}z^n\frac {(\lambda_2 x)^n} {n!} e^{-\lambda_2 x}dF_{\alpha}(x)=\alpha(\lambda_2-\lambda_2 z).
\end{equation}
It follows from (\ref{busy-eqn-alpha}) that
\begin{equation}
\alpha(\lambda_2-\lambda_2 z)=\beta(\lambda - \lambda_1 \alpha(\lambda_2-\lambda_2 z)-\lambda_2 z ).\label{busy-eqn-alpha-z}
\end{equation}
By comparing (\ref{Falin-phi-eqn}) and (\ref{busy-eqn-alpha-z}) and noticing the uniqueness of $h(z)$, we immediately have
\begin{equation}
h(z)=\alpha(\lambda_2-\lambda_2 z)=E(z^{N_{\lambda_2}(T_{\alpha})}).\label{alpha-phi}
\end{equation}

\begin{remark}\label{h(z)}
$h(z)$ is the PGF of the number of arrivals of a Poisson process with arrival rate $\lambda_2$ within an independent random time $T_{\alpha}$, where $T_{\alpha}$ has the same probability distribution as that for
the busy period of the standard $M/G/1$ queue with arrival rate $\lambda_1$ and service time $T_{\beta}$.
\end{remark}

We will use $L(t)$ to represent a slowly varying function at $\infty$ (see Definition~\ref{Definition 3.1}) and make the following basic assumption on the service time $T_{\beta}$:
\\
\\
\noindent{\bf A.}
{\it The service time $T_{\beta}$ has tail probability
$P\{T_{\beta}>t\} \sim t^{-a}L(t)$ as $t\to\infty$, where $a>1$.
}

Since $T_{\alpha}$ is the busy period of the ordinary $M/G/1$ queue with arrival rate $\lambda_1$ and the service time $T_{\beta}$, its asymptotic tail probability is regularly varying according to de Meyer and Teugels~\cite{Meyer-Teugels:1980} (see Lemma~\ref{Lemma 2.1} in Appendix).

\section{Stochastic decompositions} \label{sec:3}

In this section, we will verify that $R_0(z_2)$, $M_a(z_1,z_2)$, $M_b(z_1,z_2)$ and $M_c(z_2)$ can be viewed as the PGFs of four r.v.s, and at the same time, we will find the stochastic decompositions of these r.v.s, which will be used for the asymptotic analysis in later sections.

\subsection{Probabilistic interpretation for PGF $R_0(z_2)$}

Based on the definition of $R_0$ given earlier, this is a r.v. whose distribution coincides with the conditional distribution of $R_{orb}$, given that $I_{ser}=0$. In this section, we provide a new probabilistic interpretation for $R_0$, which is useful for our tail asymptotic analysis.

Substituting (\ref{alpha-phi}) into (\ref{FalinD0}), we obtain
\begin{equation}
R_0(z_2)=\exp\left\{-\frac {\lambda} {\mu}\int_{z_2}^1\frac {1-\alpha(\lambda_2-\lambda_2 u)} {\alpha(\lambda_2-\lambda_2 u)-u} du\right\}.\label{D0-2}
\end{equation}
We now rewrite (\ref{D0-2}). Let
\begin{eqnarray}
\psi &=&\frac{\rho}{\mu(1-\rho)}, \label{psi} \\
\kappa(s)&=&\frac{1-\rho} {\beta_1}\cdot\frac{1-\alpha(s)} {s-\lambda_2+\lambda_2\alpha(s)},\label{K(u)}\\
\omega(s)&=&1-\int_0^s\kappa(u)du,\label{omega(s)}\\
\tau(s)&=&\exp\left\{\psi\omega(s)-\psi\right\}.\label{tau00}
\end{eqnarray}
It is easy to see, from (\ref{D0-2})--(\ref{tau00}), that
\begin{eqnarray}
    R_0(z_2)&=& \exp\left\{-\frac {\lambda} {\mu}\int_0^{\lambda_2-\lambda_2 z_2}\frac{1-\alpha(s)} {s-\lambda_2+\lambda_2\alpha(s)}ds\right\}\nonumber\\
    &=&\exp\left\{-\psi\int_0^{\lambda_2-\lambda_2 z_2}\kappa(u)du\right\}\nonumber\\
    &=&\tau(\lambda_2-\lambda_2 z_2).\label{D^{(0)}}
\end{eqnarray}

In the following Remark~\ref{kappa-geo-sum}, Remark~\ref{omega-LST} and Remark~\ref{tau-poisson-sum}, we assert that $\kappa(s)$, $\omega(s)$ and $\tau(s)$ are the LSTs of three probability distribution functions on $[0,\infty)$, respectively. For the first assertion, let $F_{\alpha}^{(e)}(x)$ be the equilibrium distribution of $F_{\alpha}(x)$,
which is defined as $F_{\alpha}^{(e)}(x)= \alpha_1^{-1}\int_0^{x}(1-F_{\alpha}(t))dt$ where $\alpha_1=E(T_{\alpha})$ is given in (\ref{busy-exp-T-alpha}).
The LST of $F_{\alpha}^{(e)}(x)$ can be written as $\alpha^{(e)}(s)=(1-\alpha(s))/(\alpha_1 s)$.
From (\ref{K(u)}), we have
\begin{equation}\label{kappa(s)-2}
\kappa(s) = \frac{(1-\vartheta)\alpha^{(e)}(s)}{1-\vartheta\alpha^{(e)}(s)}=\sum_{k=1}^{\infty}(1-\vartheta)\vartheta^{k-1}(\alpha^{(e)}(s))^k,
\end{equation}
where
\begin{equation}\label{vartheta}
\vartheta=\lambda_2\alpha_1=\rho_2/(1-\rho_1)<1.
\end{equation}

\begin{remark}\label{kappa-geo-sum}
Immediately from (\ref{kappa(s)-2}), $\kappa(s)$ can be viewed as the LST of the distribution function of a r.v. $T_{\kappa}$, whose distribution function is denoted by $F_{\kappa}(x)$. More precisely, $T_{\kappa}\stackrel{\rm d}{=}T_{\alpha,1}^{(e)}+T_{\alpha,2}^{(e)}+\cdots+T_{\alpha,J}^{(e)}$,
where $T_{\alpha,j}^{(e)}$, $j\ge 1$ are i.i.d. r.v.s, each with the distribution $F_{\alpha}^{(e)}(x)$, $P(J=j)=(1-\vartheta)\vartheta^{j-1}$, $j\ge 1$ and $J$ is independent of $T_{\alpha,j}^{(e)}$ for $j\ge 1$.
In the above, the notation ``$\stackrel{\rm d}{=}$'' was used (which will be used later again) to  mean the equality in probability distribution.
\end{remark}

For the second assertion,  we will use Theorem~1 in Feller~\cite{Feller1971} (see p.439): A function $\varphi(s)$ is the LST of a probability distribution function iff $\varphi(0)=1$ and $\varphi(s)$ is completely monotone, i.e., $\varphi(s)$ possesses derivatives $\varphi^{(n)}(s)$ of all orders such that $(-1)^{n}\varphi^{(n)}(s)\ge 0$ for $s> 0$.
\begin{remark}\label{omega-LST}
Immediately from (\ref{omega(s)}), $\omega(0)=1$ and $(-1)^{n}\omega^{(n)}(s)=(-1)^{n-1}\kappa^{(n-1)}(s)\ge 0$ for $s> 0$, and we assert that
$\omega(s)$ is the LST of a probability distribution function of a r.v. $T_{\omega}$, whose distribution is denoted by $F_{\omega}(x)$.
\end{remark}

For the third assertion, we write (\ref{tau00}) as
\begin{equation}\label{tau(s)-2}
\tau(s) = \sum_{k=0}^{\infty}\frac {\psi^k} {k!} e^{-\psi} (\omega(s))^k.
\end{equation}

\begin{remark}\label{tau-poisson-sum}
Immediately from (\ref{tau(s)-2}), $\tau(s)$ can be viewed as the LST of the distribution function of a r.v. $T_{\tau}$, whose distribution function is denoted by $F_{\tau}(x)$. More precisely, $T_{\tau}\stackrel{\rm d}{=}\sum_{j=0}^J T_{\omega,j}$, where $T_{\omega,j}$, $j\ge 1$, are i.i.d. r.v.s  each with the distribution $F_{\omega}(x)$, and $P(J=j)=\frac {\psi^j} {j!} e^{-\psi}$, $j\ge 0$, where $J$ is independent of $T_{\omega,j}$ for $j\ge 0$.
\end{remark}

The following remark provides the detailed interpretation on (\ref{D^{(0)}}).

\begin{remark}\label{tau-compound-possion}
$R_0$ can be regarded as the number of Poisson arrivals with rate $\lambda_2$ within an independent random time $T_{\tau}$, i.e.,
$R_0\stackrel{\rm d}{=}N_{\lambda_2}(T_{\tau})$.
\end{remark}

\subsection{Probabilistic interpretation for PGF $M_c(z_2)$}

Recall that $M_c(z_2)$ is defined in (\ref{FalinG3}). In this section, we show that $M_c(z_2)$ is
 the PGF of a r.v., denoted by $M_c$, or $M_c(z_2)=E(z_2^{M_c})$, and provide a probabilistic interpretation for $M_c$.

It follows from (\ref{FalinG3}) and (\ref{alpha-phi}) that
\begin{eqnarray}\label{M_c(z_2)}
M_c(z_2)&=&\frac {(1-\vartheta)\cdot(1- z_2)} {\alpha(\lambda_2-\lambda_2 z_2)-z_2}
=\frac {1-\vartheta} {1-\vartheta\alpha^{(e)}(\lambda_2-\lambda_2 z_2)}\nonumber\\
&=&\sum_{k=0}^{\infty} (1-\vartheta) \vartheta^{k} (\alpha^{(e)}(\lambda_2-\lambda_2 z_2) )^{k}\nonumber\\
&=&1-\vartheta+\vartheta\cdot\kappa(\lambda_2-\lambda_2 z_2)\quad \mbox{(by (\ref{kappa(s)-2}))}.
\end{eqnarray}
Define
\begin{equation}\label{def-T-theta}
T_{\eta}=\left\{\begin{array}{ll}
0, &\mbox{ with probability }1-\vartheta,\\
T_{\kappa}, &\mbox{ with probability }\vartheta.
\end{array}
\right.
\end{equation}
Denote by $\eta(s)$ the LST of the probability distribution function of $T_{\eta}$. We immediately see that $\eta(s)=1-\vartheta+\vartheta\cdot\kappa(s)$, which, together with (\ref{M_c(z_2)}), yields
\begin{equation}\label{M_c(z_2)-2}
M_c(z_2)=\eta(\lambda_2-\lambda_2 z_2).
\end{equation}

The following remark provides a probabilistic interpretation for $M_c$.
\begin{remark}\label{remark3-3}
With the same argument as that in Remark \ref{tau-compound-possion}, (\ref{M_c(z_2)-2}) allows us to interpret the r.v. $M_c$ as the number of Poisson arrivals with rate $\lambda_2$ within an independent random time $T_{\eta}$, i.e., $M_c\stackrel{\rm d}{=}N_{\lambda_2}(T_{\eta})$.
\end{remark}

\subsection{Probabilistic interpretation for PGF $M_a(z_1,z_2)$}

In this section, we prove that $M_a(z_1,z_2)$, defined in (\ref{FalinG1}), is the PGF of a two-dimensional r.v., denoted by $(M_{a1},M_{a2})$, or
$M_a(z_1,z_2)=E (z_1^{M_{a1}}z_2^{M_{a2}} )$. Denote $F_{\beta}^{(e)}(x)$ to be the equilibrium distribution of $F_{\beta}(x)$, that is,
$F_{\beta}^{(e)}(x)=\beta_1^{-1} \int_0^{x} (1-F_{\beta}(t))dt.$
The LST of $F_{\beta}^{(e)}(x)$ can be written as
$\beta^{(e)}(s)= (1-\beta(s))/(\beta_1 s).$
It follows from (\ref{FalinG1}) that
\begin{equation}
M_a(z_1,z_2)=\beta^{(e)}(\lambda-\lambda_1 z_1 -\lambda_2 z_2).\label{M1(z1,z2)}
\end{equation}

For simplicity, we introduce the following concept:
\begin{definition}
Let $N$ be a non-negative integer valued r.v. and let $\{X_k\}_{k=1}^{\infty}$ be a sequence of i.i.d. Bernoulli r.v.s having a common distribution $P\{X_k=1\}=c$ and $P\{X_k=0\}=1-c$, where  $0<c<1$, and are assumed to be independent of $N$. The two-dimensional r.v.
$(\sum_{k=1}^N X_k,N-\sum_{k=1}^N X_k)$, where $\sum_{1}^0\equiv 0$,
is called an independent $(c,1-c)$-splitting of $N$, denoted by $split(N;c,1-c)$.
\end{definition}

The following remark provides a definition for $(M_{a1},M_{a2})$, together with its probabilistic interpretation.
\begin{remark}\label{M{a1},M{a2}}
$(M_{a1},M_{a2})\stackrel{\rm d}{=}split(N_{\lambda}(T_{\beta}^{(e)});q,p)$, which can be easily checked because the right hand side of (\ref{M1(z1,z2)}) can be written as
$\int_0^{\infty}\sum_{n=0}^{\infty} [\sum_{k=0}^{n}z_1^{k}z_2^{n-k}\binom{n}{k}q^{k}p^{n-k} ]((\lambda x)^n/n!) e^{-\lambda x}dF_{\beta}^{(e)}(x)$. Therefore, $M_a(z_1,z_2)$ is the PGF of Poisson arrivals with rate $\lambda$, split into two components according to the independent $(c,1-c)$-splitting defined above, within an independent random time $T^{(e)}_{\beta}$ whose distribution coincides with the equilibrium distribution $F_{\beta}^{(e)}(x)$.
\end{remark}

\subsection{Probabilistic interpretation for PGF $M_b(z_1,z_2)$}

In this section, we prove that $M_b(z_1,z_2)$, defined in (\ref{FalinG2}) is the PGF of a two-dimensional r.v., denoted by $(M_{b1},M_{b2})$, or $M_b(z_1,z_2)= E(z_1^{M_{b1}}z_2^{M_{b2}} )$. Let
\begin{eqnarray}
H(z_1,z_2)&=&\frac 1 {\rho_1}\cdot\frac {\beta(\lambda-\lambda_1 z_1-\lambda_2 z_2)-h(z_2)} {z_1-h(z_2)} \label{G2-decomp-H}\\
&=&\frac 1 {\rho_1}\cdot\frac {\beta(\lambda-\lambda_1 z_1-\lambda_2 z_2)-\beta(\lambda-\lambda_1 h(z_2)-\lambda_2 z_2)} {z_1-h(z_2)}\quad \mbox{(by (\ref{Falin-phi-eqn}))}.\label{G2-decomp-H-x}
\end{eqnarray}
It follows from (\ref{FalinG2}) and (\ref{G2-decomp-H}) that
\begin{equation}
M_b(z_1,z_2)=\frac {1-\rho_1} {1-\rho_1 H(z_1,z_2)}=\sum_{n=0}^{\infty} (1-\rho_1) \rho_1^{n} \left(H(z_1,z_2)\right)^n, \label{G2-decomp-eqn1}
\end{equation}
Clearly,
$(M_{b1},M_{b2})$ can be regarded as a geometric sum of two-dimensional r.v.s., provided that $H(z_1,z_2)$ is the PGF of a two-dimensional r.v.
To verify this, we write (\ref{G2-decomp-H-x}) as a power series.
Let $b_k=\int_0^{\infty}\frac{(\lambda t)^k} {k!}e^{-\lambda t} dF_{\beta}(t)$, $k\ge 1$.
Hence,
\begin{equation}
\beta(\lambda-\lambda_1 z_1-\lambda_2 z_2)=\int_0^{\infty}\sum_{k=0}^{\infty}\frac{(\lambda(q z_1+p z_2)t)^k} {k!}\cdot e^{-\lambda t} dF_{\beta}(t)=\beta(\lambda)+\sum_{k=1}^{\infty} b_k (q z_1+p z_2)^k, \label{H-lemma-1-proof-1}
\end{equation}
and
\begin{equation}
\beta(\lambda-\lambda_1 h(z_2)-\lambda_2 z_2)=\beta(\lambda)+\sum_{k=1}^{\infty} b_k (q h(z_2)+p z_2)^k. \label{H-lemma-1-proof-2}
\end{equation}
Substituting (\ref{H-lemma-1-proof-1}) and (\ref{H-lemma-1-proof-2}) into the numerator of the right-hand side of (\ref{G2-decomp-H-x}), we obtain
\begin{eqnarray}
H(z_1,z_2)
&=&\frac 1 {\rho_1}\cdot\sum_{k=1}^{\infty} b_k \left(\frac{(q z_1+p z_2)^k-(q h(z_2)+p z_2)^k} {z_1-h(z_2)}\right)\nonumber\\
&=&\frac 1 {\rho}\cdot\sum_{k=1}^{\infty} b_k \sum_{i=1}^{k}(q z_1+p z_2)^{i-1}(q h(z_2)+p z_2)^{k-i}\nonumber\\
&=&\frac 1 {\rho}\cdot\sum_{k=1}^{\infty} k b_k\cdot D_{k}(z_1,z_2),\label{H-lemma-1-proof-3}
\end{eqnarray}
where
\begin{eqnarray}\label{Hj-lemma-1-eqn-1}
D_{k}(z_1,z_2)&=&\frac 1 k \sum_{i=1}^{k}(q z_1+p z_2)^{i-1}(q h(z_2)+p z_2)^{k-i}.
\end{eqnarray}
Note that $q h(z_2)+p z_2$ and $q z_1+p z_2$ are the PGFs of r.v.s (one or two-dimensional). Hence, for $k\ge 1$, $D_{k}(z_1,z_2)$ is the PGF of a two-dimensional r.v., denoted by $(D_{k,1},D_{k,2})$. In addition,
\begin{equation}
\sum_{k=1}^{\infty} k b_k=\sum_{k=1}^{\infty} \int_0^{\infty}\frac{(\lambda t)^k} {(k-1)!}e^{-\lambda t}dF_{\beta}(t)=\lambda\int_0^{\infty}tdF_{\beta}(t)=\rho.\label{sum-bnbar}
\end{equation}
Namely, $(1/\rho)(\sum_{k=1}^{\infty} k b_k)=1$, which together with (\ref{H-lemma-1-proof-3}) imply that $H(z_1,z_2)$ is the PGF of a two-dimensional r.v., denoted by $(H_1,H_2)$. Namely, $H(z_1,z_2)=E (z_1^{H_1}z_2^{H_2})$.
The above argument is summarized in the following remarks.
\begin{remark}
Suppose that $\{(Y_{n,1},Y_{n,2})\}_{n=1}^{\infty}$ is a sequence of independent two-dimensional r.v.s, each with a common PGF $q z_1+p z_2$,  $\{Z_m\}_{m=1}^{\infty}$ is a sequence of independent r.v.s, each with a common PGF $q h(z_2)+p z_2$, and the two sequences are independent.
It follows from (\ref{Hj-lemma-1-eqn-1}) that for $k\ge 1$,
\begin{equation}
(D_{k,1},D_{k,2})\overset{\footnotesize\mbox{d}}{=}
\sum_{n=1}^{i-1}(Y_{n,1},Y_{n,2})+\sum_{m=1}^{k-i}(0,Z_{m}) \mbox{ with probability }1/k,\quad i=1,2,\cdots,k.
\end{equation}
\end{remark}
\begin{remark}\label{remark-H{1}H{2}}
It follows from (\ref{H-lemma-1-proof-3}) that
\begin{equation}
(H_1,H_2)\overset{\footnotesize\mbox{d}}{=}
(D_{k,1},D_{k,2}) \mbox{ with probability }(1/\rho)kb_{k},\quad k\ge 1.
\end{equation}
\end{remark}

\begin{remark}\label{remark-M{b1}M{b2}}
It follows from (\ref{G2-decomp-eqn1}) that $(M_{b1},M_{b2})$ is a geometric sum of i.i.d. two-dimensional r.v.s $(H_1^{(i)},H_2^{(i)})$, $i\ge 1$, each with the same PGF $H(z_1,z_2)$, and precisely,
\begin{equation}\label{remark-M{b1}M{b2}-eqn1}
(M_{b1},M_{b2})\stackrel{\rm d}{=}\left\{\begin{array}{ll}
(0,0), &\mbox{ with probability }1-\rho_1,\\
\sum_{i=1}^{J}(H_1^{(i)},H_2^{(i)}) &\mbox{ with probability }\rho_1,
\end{array}
\right.
\end{equation}
where $P(J=n)=(1-\rho_1)\rho_1^{n-1}$, $n\ge 1$ and $J$ is independent of $(H_1^{(i)},H_2^{(i)})$ for $i\ge 1$.
\end{remark}

\section{Tail Asymptotics} \label{sec:4}

In this section, we will study the asymptotic behavior for the tail probabilities
$P\{R_{orb}>j|I_{ser}=0\}$, $P\{R_{que}>j|I_{ser}=1\}$, $P\{R_{orb}>j|I_{ser}=1\}$
and $P\{R_{orb}>j|R_{que}=0,I_{ser}=1\}$, as $j\to\infty$, respectively.

\subsection{Asymptotic tail probability for $P\{R_{orb}>j|I_{ser}=0\}$} \label{R0+Mc}

To study the asymptotic behavior for the tail probability $P\{R_{orb}>j|I_{ser}=0\}\equiv P\{R_0>j\}$,
let us first study the asymptotic properties of the tail probabilities for $T_{\kappa}$, $T_{\omega}$ and $T_{\tau}$, respectively.
By Lemma \ref{Lemma 2.1}, we know that $P\{T_{\alpha}>t\}\sim (1-\rho_1)^{-a-1} t^{-a} L(t)$ as $t\to\infty$, where the r.v. $T_\alpha$ is the busy period defined in Section~\ref{sec:2}.
Applying Karamata's theorem (e.g., p.28 in \cite{Bingham:1989}), we have $\int_t^{\infty}(1-F_{\alpha}(x))dx\sim (a-1)^{-1} (1-\rho_1)^{-a-1} t^{-a+1} L(t)$,
which implies that $1-F_{\alpha}^{(e)}(t) \sim ((a-1)\alpha_1)^{-1} (1-\rho_1)^{-a-1} t^{-a+1} L(t)$, $t\to\infty$.
By Remark~\ref{kappa-geo-sum} and applying Lemma~\ref{Embrechts-compound-geo}, we have
\begin{eqnarray}\label{P{T-kappa>t}}
P\{T_{\kappa}>t\}&\sim& c_{\kappa} \cdot t^{-a+1} L(t),\quad t\to\infty,
\end{eqnarray}
where
\begin{eqnarray}\label{c-kappa}
c_{\kappa}=\frac 1 {\alpha_1 (1-\vartheta)(a-1)(1-\rho_1)^{a+1}}=\frac 1 {\beta_1 (1-\rho)(a-1)(1-\rho_1)^{a-1}}.
\end{eqnarray}

In the following lemma, we present the asymptotic tail probability of $T_{\omega}$.

\begin{lemma}\label{theorem-T-tail}
\begin{equation}
P\{T_{\omega}>t\} \sim (1-1/a)c_{\kappa}\cdot t^{-a}L(t),\quad t\to\infty.\label{main}
\end{equation}
\end{lemma}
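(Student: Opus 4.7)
My plan is to identify the distribution of $T_{\omega}$ as a length-debiased version of $T_{\kappa}$, then extract the tail of $T_{\omega}$ from the tail of $T_{\kappa}$ via one integration by parts and Karamata's theorem.

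The first step is to relate the measures $dF_{\omega}$ and $dF_{\kappa}$. Since $\omega(s)=1-\int_{0}^{s}\kappa(u)\,du$ one has $\omega'(s)=-\kappa(s)$. Differentiating $\omega(s)=\int_{[0,\infty)}e^{-sx}\,dF_{\omega}(x)$ under the integral sign (which is justified since $dF_{\omega}$ is a finite measure and $xe^{-sx}$ is bounded for each $s>0$) yields
\begin{equation*}
\int_{(0,\infty)}xe^{-sx}\,dF_{\omega}(x)=-\omega'(s)=\kappa(s)=\int_{[0,\infty)}e^{-sx}\,dF_{\kappa}(x),\qquad s>0.
\end{equation*}
By the uniqueness of Laplace-Stieltjes transforms, the two positive measures agree on $(0,\infty)$, i.e.\ $x\,dF_{\omega}(x)=dF_{\kappa}(x)$ there. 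Therefore, for $t>0$,
\begin{equation*}
P\{T_{\omega}>t\}=\int_{t}^{\infty}\frac{1}{x}\,dF_{\kappa}(x).
\end{equation*}

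The second step is a Stieltjes integration by parts. Writing $G_{\kappa}(x)=1-F_{\kappa}(x)$ and integrating by parts on $(t,\infty)$ (the boundary term at $\infty$ vanishes because $G_{\kappa}(x)/x\to 0$), we obtain
\begin{equation*}
P\{T_{\omega}>t\}=\frac{G_{\kappa}(t)}{t}-\int_{t}^{\infty}\frac{G_{\kappa}(x)}{x^{2}}\,dx.
\end{equation*}

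Finally, I plug in the known tail of $T_{\kappa}$, namely (\ref{P{T-kappa>t}}), which gives $G_{\kappa}(t)/t\sim c_{\kappa}\,t^{-a}L(t)$. The integrand in the remaining integral is regularly varying of index $-a-1<-1$, so Karamata's theorem yields
\begin{equation*}
\int_{t}^{\infty}\frac{G_{\kappa}(x)}{x^{2}}\,dx\sim \frac{c_{\kappa}}{a}\,t^{-a}L(t).
\end{equation*}
Subtracting these two terms gives $P\{T_{\omega}>t\}\sim(1-1/a)c_{\kappa}\,t^{-a}L(t)$, which is (\ref{main}). The main conceptual step is the measure identification $x\,dF_{\omega}=dF_{\kappa}$ in Step 1; once that is in hand, the remaining work is standard regular-variation bookkeeping and the subtraction of leading-order terms requires no cancellation since they have different coefficients $1$ and $1/a$.
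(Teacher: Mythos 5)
Your proof is correct, and it takes a genuinely different and cleaner route than the paper. The paper works entirely on the transform side: it expands $\kappa(s)$ around $s=0$ to the maximal finite moment, integrates the expansion to get one for $\omega(s)$, and then invokes the Tauberian machinery (Lemma~\ref{Cohen} in the non-integer case, Lemma~\ref{Lemma 4.5new} and de~Haan theory in the integer case) to read off the tail of $F_\omega$. That forces a case split on whether $a$ is an integer and some careful bookkeeping of the remainder terms $\phi_n(s)$, $\widehat\phi_n(s)$. You instead translate $\omega'(s)=-\kappa(s)$ into the measure identity $x\,dF_\omega(x)=dF_\kappa(x)$ on $(0,\infty)$ --- i.e.\ $T_\omega$ has the \emph{inverse}-size-biased law of $T_\kappa$ (equivalently, $T_\kappa$ is the size-biased version of $T_\omega$, consistent with $E(T_\omega)=\kappa(0)=1$) --- and then get the tail directly by integration by parts and Karamata's Abelian theorem, with no case distinction. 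The one point worth stating explicitly is the final subtraction: you have $G_\kappa(t)/t\sim c_\kappa t^{-a}L(t)$ and $\int_t^\infty G_\kappa(x)x^{-2}\,dx\sim (c_\kappa/a)t^{-a}L(t)$, and since the constants $1$ and $1/a$ differ (as $a>1$), dividing through by $t^{-a}L(t)$ and passing to the limit gives the coefficient $1-1/a$ rigorously; you note this, which is enough. Your argument thus proves exactly the same asymptotic as the paper's, more economically and without touching the de~Haan class $\Pi$.
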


\proof
Recall that $T_{\omega}$ has the distribution function $F_{\omega}(x)$, defined in terms of its LST $\omega(s)$ in (\ref{omega(s)}),
which is determined by the LST $\kappa(s)$ of the distribution function of $T_{\kappa}$. We divide the proof into two parts, depending on whether $a$ is an integer or not.

\noindent{\it Case 1: Non-integer $a >1$.}
Suppose that $m<a<m+1$, $m\in\{1,2,\ldots\}$. Since $P\{T_{\kappa}>t\}\sim c_{\kappa}\cdot t^{-a+1}L(t)$, we have its moments $\kappa_{m-1}<\infty$ and $\kappa_{m}=\infty$.
Define $\kappa_{m-1}(s)$ in a manner similar to that in (\ref{phi1}). Then $\kappa(s) = \sum_{k=0}^{m-1}\frac{\kappa_k}{k!}(-s)^k+(-1)^{m}\kappa_{m-1}(s)$.
By Lemma~\ref{Cohen}, we know that
\begin{equation}\label{kappa{m-1}(s)asym}
\kappa_{m-1}(s) \sim \big[\Gamma(a-m)\Gamma(m+1-a)/\Gamma(a-1)\big]\cdot c_{\kappa} s^{a-1}L(1/s),\quad s\downarrow 0.
\end{equation}
From (\ref{omega(s)}), there are constants $\{v_k;\ k=0,1,2,\ldots,m\}$ satisfying $\omega(s) = \sum_{k=0}^{m}v_k (-s)^k+(-1)^{m+1} \int_0^s\kappa_{m-1}(u)du$.
Define $\omega_m(s)$ in a manner similar to that in (\ref{phi1}). Then
\begin{equation}
\omega_m(s) = \int_0^s\kappa_{m-1}(u)du \sim \big[\Gamma(a-m)\Gamma(m+1-a)/\Gamma(a)\big] \cdot\frac {a-1} {a} c_{\kappa} s^{a}L(1/s),\quad s\downarrow 0,\label{case1result}
\end{equation}
where we have used (\ref{kappa{m-1}(s)asym}) and Karamata's theorem (p.28 in \cite{Bingham:1989}). Applying Lemma \ref{Cohen}, we
complete the proof of Theorem \ref{theorem-T-tail} for non-integer $a>1$.
\\
\\
{\it Case 2: Integer $a >1$.}
Suppose that $a =m\in\{2,3,\ldots\}$. Since $P\{T_{\kappa}>t\}\sim c_{\kappa}\cdot t^{-m+1}L(t)$ implies that $T_{\kappa}$ has its moment $\kappa_{m-2}<\infty$, we can define $\kappa_{m-2}(s)$ and $\widehat{\kappa}_{m-2}(s)$ in a manner similar to that in (\ref{phi1}) and (\ref{phi2}), respectively.
Then $\kappa(s) = \sum_{k=0}^{m-2}\frac{\kappa_k}{k!}(-s)^k+(-1)^{m-1}\kappa_{m-2}(s)$.
By Lemma~\ref{Lemma 4.5new}, we obtain
\begin{equation}
\widehat{\kappa}_{m-2}(xu)-\widehat{\kappa}_{m-2}(u)\sim -(\log x) c_{\kappa} L(1/u)/(m-2)!\quad \mbox{ as}\ u\downarrow 0.\label{case2kappa-5}
\end{equation}
From (\ref{omega(s)}), there exist constants $\{v_k;\ k=0,1,2,\ldots,m\}$ satisfying $\omega(s) = \sum_{k=0}^{m}v_k (-s)^k+(-1)^{m} \int_0^s\kappa_{m-2}(u)du.$
Define $\widehat{\omega}_{m-1}(s)$ in a manner similar to that in (\ref{phi2}). Then, we have
\begin{equation}
\widehat{\omega}_{m-1}(s) = v_m+ \frac 1 {s^m}\int_0^s u^{m-1}\widehat{\kappa}_{m-2}(u)du,\label{case2tau-2}
\end{equation}
which immediately gives us $\widehat{\omega}_{m-1}(xs)=v_m+ \frac 1 {s^m}\int_0^{s} u^{m-1}\widehat{\kappa}_{m-2}(xu)du$.
It follows that
\begin{eqnarray}
\widehat{\omega}_{m-1}(xs)-\widehat{\omega}_{m-1}(s)&=&\frac 1 {s^m}\int_0^{s} u^{m-1}\big[\widehat{\kappa}_{m-2}(xu)-\widehat{\kappa}_{m-2}(u)\big]du\nonumber\\
&\sim& - \frac {m-1} {m} c_{\kappa} \log x \frac {L(1/s)} {(m-1)!}\quad \mbox{ as}\ s\downarrow 0,\label{case2kappa-5add}
\end{eqnarray}
where we have used (\ref{case2kappa-5}) and Karamata's theorem (p.28 in \cite{Bingham:1989}).
By applying Lemma~\ref{Lemma 4.5new}, we
complete the proof of Theorem \ref{theorem-T-tail} for integer $a=m\in \{2,3,\ldots\}$.
\pend

Recall Remark \ref{tau-poisson-sum}, with $T_{\tau}\stackrel{\rm d}{=}\sum_{j=0}^J T_{\omega,j}$, where $J$ has a Poisson distribution with parameter $\psi$,
$T_{\omega,j}$, $j\ge 1$, are i.i.d. r.v.s with the distribution function $F_{\omega}(x)$, and $J$ is independent of $T_{\omega,j}$, $j\ge 1$.
Therefore, by Lemma~\ref{Embrechts-compound-geo},
\begin{equation}\label{P{T-tau>t}}
P\{T_{\tau}>t\} \sim \psi\cdot P\{T_{\omega}>t\} \sim (1-1/a)c_{\kappa}\psi\cdot t^{-a}L(t),\quad t\to\infty.
\end{equation}

From Remark \ref{tau-compound-possion}, we know that $R_0=N_{\lambda_2}(T_{\tau})$. Lemma \ref{lemma-Asmussen-poisson} leads to
\begin{equation}\label{P{R_0>j}sim}
P\{R_0>j\}\sim P\{T_{\tau}>j/\lambda_2\} \sim
\frac {\lambda \lambda_2^a} {a\mu(1-\rho)^2 (1-\rho_1)^{a-1}}\cdot j^{-a}L(j),\quad j\to\infty.
\end{equation}

\subsection{Asymptotic tail probability for $P\{R_{que}>j|I_{ser}=1\}$} \label{sec:4.2}

In this subsection, we will study the asymptotic behaviour of the tail probability $P\{R_{que}>j|I_{ser}=1\}\equiv P\{R_{11}>j\}$ as $j\to\infty$.
Note that $E(z_1^{R_{11}})=R_1(z_1,1)$. Taking $z_2\to 1$ in (\ref{FalinG3}), (\ref{FalinD0}) and (\ref{FalinD1}) and using the fact that $\frac d {dz} h(z)\big |_{z=1}=\lambda_2 \alpha_1=\rho_2 /(1-\rho_1)$, we immediately have $M_c(1)=1$, $R_0(1)= 1$ and
\begin{equation}
E(z_1^{R_{11}})= M_a(z_1,1) M_b(z_1,1).\label{N_1(z_1,1)}
\end{equation}
It follows from (\ref{FalinG1}), (\ref{G2-decomp-eqn1}) and (\ref{G2-decomp-H} ) that
\begin{eqnarray}
M_a(z_1,1)&=&\frac 1 {\rho}\cdot\frac {1-\beta(\lambda_1-\lambda_1 z_1)} {q- q z_1}=\beta^{(e)}(\lambda_1-\lambda_1 z_1)\label{M_a(z_1,1)},\\
M_b(z_1,1)&=&\sum_{n=0}^{\infty} (1-\rho_1) \rho_1^n \left(H(z_1,1)\right)^n \label{M_b(z_1,1)},\\
H(z_1,1)&=&\frac 1 {\rho_1}\cdot\frac {\beta(\lambda_1-\lambda_1 z_1)-1} {z_1-1}=\beta^{(e)}(\lambda_1-\lambda_1 z_1) \label{H(z_1,1)}.
\end{eqnarray}
Substituting (\ref{H(z_1,1)}) into (\ref{M_b(z_1,1)}) and applying (\ref{M_a(z_1,1)}) and (\ref{N_1(z_1,1)}), we have
\begin{equation}\label{R11=possion-arrival}
E(z_1^{R_{11}})=\xi(\lambda_1-\lambda_1 z_1),
\end{equation}
where
\begin{equation}\label{remark-xi(s)}
\xi(s)=\sum_{n=1}^{\infty} (1-\rho_1) \rho_1^{n-1} (\beta^{(e)}(s) )^{n}.
\end{equation}
\begin{remark}\label{remark4-1}
Immediately from (\ref{remark-xi(s)}), $\xi(s)$ can be viewed as the LST of the distribution function of the r.v. $T_{\xi}\stackrel{\rm d}{=}T_{\beta,1}^{(e)}+T_{\beta,2}^{(e)}+\cdots+T_{\beta,J}^{(e)}$, where $T_{\beta,j}^{(e)}$, $j\ge 1$, are
i.i.d. r.v.s. with a common distribution $F_{\beta}^{(e)}(x)$, $P(J=j)=(1-\rho_1) \rho_1^{j-1}$, $j\ge 1$, and $J$ is independent of $T_{\beta,j}^{(e)}$, $j\ge 1$.
\end{remark}

Under Assumption A, by Karamata's theorem (e.g., p.28 in \cite{Bingham:1989}), we have $\int_t^{\infty}(1-F_{\beta}(x))dx\sim (a-1)^{-1} t^{-a+1} L(t)$,
which implies that $1-F_{\beta}^{(e)}(t) \sim ((a-1)\beta_1)^{-1}  t^{-a+1} L(t)$, $t\to\infty$.
By Remark~\ref{remark4-1} and applying Lemma~\ref{Embrechts-compound-geo}, we have
\begin{equation}\label{P{K>x}}
P\{T_{\xi}>t\} \sim \frac 1 {(1-\rho_1)(a-1)\beta_1 }\cdot t^{-a+1} L(t),\quad t\to\infty.
\end{equation}
\begin{remark}\label{R_{11}=N}
With (\ref{R11=possion-arrival}), one can interpret $R_{11}$ as the number of Poisson arrivals with rate $\lambda_1$ within the independent random time $T_{\xi}$, i.e., $R_{11}\stackrel{\rm d}{=}N_{\lambda_1}(T_{\xi})$.
\end{remark}

By Remark \ref{R_{11}=N} and applying Lemma \ref{lemma-Asmussen-poisson}, we have
\begin{equation}\label{P{R{11}>j}sim}
P\{R_{11}>j\}\sim P\{T_{\xi}>j/\lambda_1\} \sim\frac {\lambda_1^{a-1}} {(1-\rho_1)(a-1)\beta_1}\cdot j^{-a+1} L(j),\quad j\to\infty.
\end{equation}

\subsection{Asymptotic tail probability for $P\{R_{orb}>j|I_{ser}=1\}$}\label{P{R_{orb}>j|I_{ser}=1}} \label{sec:4.3}

In this subsection, we will study the asymptotic behaviour of the tail probability $P\{R_{orb}>j|I_{ser}=1\}\equiv P\{R_{12}>j\}$ as $j\to\infty$.
Note that $E(z_2^{R_{12}})=R_1(1,z_2)$. Taking $z_1\to 1$ in (\ref{FalinG1})--(\ref{FalinG3}), we have
\begin{eqnarray}
M_a(1,z_2)\cdot M_b(1,z_2)\cdot M_c(z_2)
&=&\frac {1-\rho} {\rho}\cdot\frac {1-\beta(\lambda_2-\lambda_2 z_2)} {p- p z_2}
\cdot \frac {h(z_2)-1} {\beta(\lambda_2-\lambda_2 z_2)-1}
\cdot\frac {1- z_2} {h(z_2)-z_2}\nonumber\\
&=&\frac {1-\rho} {\rho_2}\cdot \frac {1-h(z_2)} {h(z_2)-z_2}.\label{M_a(1,z_2)M_b(1,z_2)M_c(z_2)-a}
\end{eqnarray}
Substituting  (\ref{alpha-phi}) into (\ref{M_a(1,z_2)M_b(1,z_2)M_c(z_2)-a}) gives
\begin{eqnarray}
M_a(1,z_2)\cdot M_b(1,z_2)\cdot M_c(z_2)
&=&\frac {1-\rho} {\rho_2}\cdot \frac {1-\alpha(\lambda_2-\lambda_2 z_2)} {\alpha(\lambda_2-\lambda_2 z_2)-z_2}\nonumber\\
&=&\kappa(\lambda_2-\lambda_2 z_2),\label{M_a(1,z_2)M_b(1,z_2)M_c(z_2)}
\end{eqnarray}
where the last equality follows from (\ref{K(u)}).

By (\ref{FalinD1}), (\ref{M_a(1,z_2)M_b(1,z_2)M_c(z_2)}) and (\ref{D^{(0)}}),
\begin{eqnarray}\label{R12-poisson-arrival}
E(z_2^{R_{12}})&=&M_a(1,z_2)\cdot M_b(1,z_2)\cdot M_c(z_2)\cdot R_0(z_2)\nonumber\\
&=&\kappa(\lambda_2-\lambda_2 z_2)\cdot\tau(\lambda_2-\lambda_2 z_2).
\end{eqnarray}

\begin{remark}\label{K-12}
With (\ref{R12-poisson-arrival}), one can interpret $R_{12}$ as the number of Poisson arrivals with rate $\lambda_2$ within an independent random time $T_{\kappa}+T_{\tau}$, i.e., $R_{12}\stackrel{\rm d}{=}N_{\lambda_2}(T_{\kappa}+T_{\tau})$, where $T_{\kappa}$ and $T_{\tau}$ are assumed to be independent.
\end{remark}

By (\ref{P{T-kappa>t}}) and (\ref{P{T-tau>t}}), and applying Lemma \ref{Lemma 3.4new}, we have
\begin{equation}
P\{T_{\kappa}+T_{\tau}>t\}\sim P\{T_{\kappa}>t\}\sim c_{\kappa}\cdot t^{-a+1} L(t),\quad t\to\infty.
\end{equation}
Applying Remark \ref{K-12} and Lemma \ref{lemma-Asmussen-poisson}, we have
\begin{equation}\label{P{R{12}>j}sim}
P\{R_{12}>j\}\sim P\{T_{\kappa}+T_{\tau}>j/\lambda_2\}\sim \frac {\lambda_2^{a-1}} {\beta_1 (1-\rho)(a-1)(1-\rho_1)^{a-1}}\cdot j^{-a+1} L(j),\quad j\to\infty.
\end{equation}

\subsection{Asymptotic tail probability for $P\{R_{orb}>j|I_{ser}=1,R_{que}=0\}$} \label{sec:4.6}

In previous subsections \ref{R0+Mc} and \ref{P{R_{orb}>j|I_{ser}=1}}, it has been proved that $P\{R_{orb}>j|I_{ser}=0\}$ and $P\{R_{orb}>j|I_{ser}=1\}$ are regularly varying with index $-a$ and $-a+1$, respectively (the latter is one degree heavier than the former).
In this subsection, we are interested in the case in which the server is busy but the queue is empty and study the asymptotic behavior of $P\{R_{orb}>j|I_{ser}=1,R_{que}=0\}\equiv P\{R_{12}>j|R_{11}=0\}$, as $j\to\infty$.

Immediately from (\ref{FalinD1}), we are able to decomposes $(R_{11},R_{12})$ into the four independent components as follows:
\begin{equation}\label{(R{11},R{12})decomp}
(R_{11},R_{12})\stackrel{\rm d}{=}(M_{a1},M_{a2})+(M_{b1},M_{b2})+(0,M_{c})+(0,R_{0}).
\end{equation}
The tail asymptotic behaviour of $R_{0}$ has already been obtained in Subsection \ref{R0+Mc}. Below, we will focus on r.v.s. $(M_{a1},M_{a2})$, $(M_{b1},M_{b2})$ and $M_{c}$.

\subsubsection{Tail probability $P\{M_{a2}>j|M_{a1}=0\}$}

We refer to any distribution function $F(x)$ (or r.v. $X$) as light-tailed if $E(e^{\varepsilon X}) < \infty$
for some $\varepsilon > 0$, which includes, as the classic example, the exponential distribution function, as well as
all bounded r.v.s.
\begin{lemma}
Let $N$ be a discrete r.v. with an arbitrary distribution $p_{n}=P\{N=n\}$, $n\ge 0$. If $(N_1,N_2)=split(N;1-c,c)$, $0<c<1$, then
the conditional tail distribution $P\{N_2>j|N_1=k\}$ is light-tailed as $j\to\infty$ for any fixed $k\ge 0$, or we simply say that $N_2|N_1=k$ has a lighted-tail distribution.
\end{lemma}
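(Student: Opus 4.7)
The plan is to show that the conditional distribution of $N_2$ given $N_1=k$ inherits a geometric factor $c^j$ from the Bernoulli splitting, regardless of the distribution of $N$. This single geometric factor already forces an exponential moment in a neighborhood of zero, which by the paper's definition means light-tailed.

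First I would write out the joint distribution explicitly. Conditioning on $N = N_1 + N_2 = k+j$ and using the binomial form of the independent $(1-c,c)$-splitting gives
\begin{equation*}
P\{N_1 = k,\, N_2 = j\} = p_{k+j}\binom{k+j}{k}(1-c)^k c^j.
\end{equation*}
Summing over $j$ yields $P\{N_1 = k\}$, which I shall assume to be positive (else the conditional probability is undefined). Dividing, the common factor $(1-c)^k$ cancels out and I obtain
\begin{equation*}
P\{N_2 = j \mid N_1 = k\} = \frac{p_{k+j}\binom{k+j}{k}c^j}{\sum_{i\ge 0} p_{k+i}\binom{k+i}{k}c^i}.
\end{equation*}

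Second, I would apply the trivial bound $p_{k+j}\le 1$ in the numerator to get
\begin{equation*}
P\{N_2 = j \mid N_1 = k\} \le C_k \binom{k+j}{k} c^j,
\end{equation*}
where $C_k$ is the reciprocal of the (strictly positive, finite) denominator. This is the key step: the factor $\binom{k+j}{k}$ grows only polynomially in $j$ (of degree $k$, with $k$ fixed), while $c^j$ decays exponentially since $0<c<1$. Hence for any $\varepsilon$ with $e^{\varepsilon}c<1$, i.e.\ $\varepsilon\in\bigl(0,\log(1/c)\bigr)$,
\begin{equation*}
E\bigl(e^{\varepsilon N_2}\bigm|N_1=k\bigr) \le C_k\sum_{j=0}^{\infty}\binom{k+j}{k}(e^{\varepsilon}c)^j = \frac{C_k}{(1-e^{\varepsilon}c)^{k+1}} < \infty,
\end{equation*}
so the conditional moment generating function is finite on a neighborhood of zero and $N_2\mid N_1=k$ is light-tailed.

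I do not anticipate a serious obstacle. The argument is a short direct computation whose essential content is that Bernoulli thinning introduces a geometric weight that dominates any mass function $\{p_n\}$ of $N$. The only small subtlety is that one needs $P\{N_1=k\}>0$ for the conditioning to make sense, which is implicit in the statement of the lemma.
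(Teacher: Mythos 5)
Your argument is correct and follows essentially the same route as the paper: both start from the explicit joint mass $P\{N_1=k,N_2=j\}=p_{k+j}\binom{k+j}{k}(1-c)^k c^j$, apply the trivial bound $p_{k+j}\le 1$, and exploit the fact that the polynomial factor $\binom{k+j}{k}$ is dominated by the geometric factor $c^j$. The only cosmetic difference is that the paper concludes by bounding the tail $P\{N_1=k,N_2>j\}=O(d^j)$ for some $d\in(c,1)$, while you directly verify finiteness of the conditional moment generating function on $(0,\log(1/c))$; these two formulations are interchangeable under the paper's definition of light-tailed.
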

\begin{proof}
Since
\begin{equation}
P\{N_1=k,N_2>j\}= \sum_{n=j+1}^{\infty} p_{k+n} \binom{k+n}{k} (1-c)^k c^n
\le  \sum_{n=j+1}^{\infty} \binom{k+n}{k} c^n,
\end{equation}
there exists some $d\in (c,1)$ such that $P\{N_1=k,N_2>j\}= O(d^j)$ for large $j\ge 0$.
\end{proof}
\begin{remark}\label{remark-lighted}
By Remark \ref{M{a1},M{a2}}, we know $(M_{a1},M_{a2})\stackrel{\rm d}{=}split(N_{\lambda}(T_{\beta}^{(e)});q,p)$. So
$M_{a2}|M_{a1}=0$ has a light-tailed probability distribution.
\end{remark}

\subsubsection{Tail probability $P\{H_2>j|H_1=0\}$}

In this subsection, we prove a proposition which will be used for studying the tail asymptotic behaviour of $M_{b2}|M_{b1}=0$ later in Subsection~\ref{subsec-Mb2>j|Mb1=0}.
\begin{proposition}\label{lemma-Hbeta12>j}
As $j\to\infty$,
\begin{equation}
P\{H_2>j|H_1=0\}\sim \left[\frac {\beta(\lambda_1)} {1-\beta(\lambda_1)}\cdot \frac {\lambda_2^{a}} {(1-\rho_1)^{a+1}}\right] \cdot j^{-a}L(j).\label{lemma-Hbeta12>j-formula}
\end{equation}
\end{proposition}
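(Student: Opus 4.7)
The plan is to derive an explicit form for the conditional PGF of $H_2$ given $H_1=0$, recast it as a stochastic decomposition in the spirit of Section~\ref{sec:3}, and then extract the tail using the lemmas already employed in Sections~\ref{R0+Mc}--\ref{sec:4.3}.

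Setting $z_1=0$ in (\ref{G2-decomp-H}) gives $H(0,z_2) = (h(z_2)-\beta(\lambda-\lambda_2 z_2))/(\rho_1 h(z_2))$, and evaluating at $z_2=1$ yields $P\{H_1=0\} = (1-\beta(\lambda_1))/\rho_1$. Hence
\begin{equation*}
\widetilde H(z_2) := E(z_2^{H_2}\,|\,H_1=0) = \frac{h(z_2)-\beta(\lambda-\lambda_2 z_2)}{(1-\beta(\lambda_1))\,h(z_2)}.
\end{equation*}
Rearranging and expanding as a geometric series, this identity is equivalent to
\begin{equation*}
h(z_2) = \beta(\lambda-\lambda_2 z_2)\sum_{n=0}^{\infty}(1-\beta(\lambda_1))^n\,[\widetilde H(z_2)]^n.
\end{equation*}
The prefactor $\beta(\lambda-\lambda_2 z_2)$ equals $\beta(\lambda_1)\cdot E(z_2^{N_{\lambda_2}(T_\beta^{**})})$, where $T_\beta^{**}$ is the exponentially tilted service time with density $e^{-\lambda_1 t}f_\beta(t)/\beta(\lambda_1)$, and the remaining geometric factor is $E[\widetilde H(z_2)^N]$ for an integer-valued $N$ with $P\{N=n\}=\beta(\lambda_1)(1-\beta(\lambda_1))^n$, $n\ge 0$. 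Thus the above rewrites as the distributional identity
\begin{equation*}
N_{\lambda_2}(T_\alpha) \stackrel{\rm d}{=} N_{\lambda_2}(T_\beta^{**}) + \sum_{i=1}^{N} Y^{(i)},
\end{equation*}
where all components are independent and the $Y^{(i)}$ are iid copies of $H_2\,|\,H_1=0$.

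By Lemma~\ref{Lemma 2.1} and Lemma~\ref{lemma-Asmussen-poisson} the left-hand side satisfies $P\{N_{\lambda_2}(T_\alpha)>j\} \sim \lambda_2^{a}/(1-\rho_1)^{a+1}\cdot j^{-a}L(j)$. The summand $N_{\lambda_2}(T_\beta^{**})$ is light-tailed, since $T_\beta^{**}$ has all exponential moments, so it contributes nothing to this regularly varying tail, and we deduce $P\{\sum_{i=1}^{N}Y^{(i)}>j\} \sim \lambda_2^{a}/(1-\rho_1)^{a+1}\cdot j^{-a}L(j)$. Applying the converse direction of Lemma~\ref{Embrechts-compound-geo} with $E(N)=(1-\beta(\lambda_1))/\beta(\lambda_1)$ then identifies the tail of $Y^{(i)}$ as $\beta(\lambda_1)/(1-\beta(\lambda_1))$ times that of the compound sum, which is exactly (\ref{lemma-Hbeta12>j-formula}).

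The main obstacle is this last cancellation step, since Lemma~\ref{Embrechts-compound-geo} is typically one-directional. A self-contained fallback is to analyze $1-\widetilde H(z_2)$ directly near $z_2=1$: since $\beta(\lambda-\lambda_2 z_2)-\beta(\lambda_1)h(z_2) = \beta(\lambda_1)(1-h(z_2))-(\beta(\lambda_1)-\beta(\lambda-\lambda_2 z_2))$ and $\beta$ is analytic at $\lambda_1>0$, the subtracted term is analytic in $(1-z_2)$ at $0$; the leading irregular behaviour of $1-\widetilde H$ is therefore driven entirely by $1-h(z_2)=1-\alpha(\lambda_2(1-z_2))$, and a Cohen-type argument as in the proof of Lemma~\ref{theorem-T-tail} reads off the coefficient in (\ref{lemma-Hbeta12>j-formula}) directly.
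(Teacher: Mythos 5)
Your probabilistic setup is correct and instructive: the conditional PGF $\widetilde H(z_2)=H(0,z_2)/H(0,1)$, the evaluation $P\{H_1=0\}=(1-\beta(\lambda_1))/\rho_1$, the rearrangement into $h(z_2)=\beta(\lambda-\lambda_2 z_2)\sum_{n\geq 0}(1-\beta(\lambda_1))^n[\widetilde H(z_2)]^n$, and the resulting distributional identity $N_{\lambda_2}(T_\alpha)\stackrel{\rm d}{=}N_{\lambda_2}(T_\beta^{**})+\sum_{i=1}^{N}Y^{(i)}$ are all correct. However, your main route then needs to \emph{deconvolve}: from the known regularly-varying tail of $N_{\lambda_2}(T_\alpha)$ and the light tail of $N_{\lambda_2}(T_\beta^{**})$, you want to extract the tail of the iid summands $Y^{(i)}$. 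This is a genuine gap, and you rightly flag it: Lemma~\ref{Embrechts-compound-geo} runs from the summand tail to the compound-sum tail, not backwards, and the cited tools (Lemma~\ref{Lemma 3.4new} included) do not give you the converse. Such converses do exist in the subexponential literature for geometric compounds, but they are not in the paper's toolbox, so this route cannot be closed as written.

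Your fallback is the right idea and is essentially the route the paper takes, but your sketch leaves out two things the paper does carefully. First, the paper does not work directly with $\widetilde H$; it substitutes $s=\lambda_2(1-z_2)$ to get $\gamma(s)=\frac{1}{1-\beta(\lambda_1)}\bigl[1-\beta(s+\lambda_1)/\alpha(s)\bigr]$ and proves (Lemma~\ref{lem:gamma is LST}, via complete monotonicity) that $\gamma$ is the LST of a genuine distribution $T_\gamma$ on $[0,\infty)$. This step is not automatic — a PGF composed with $1-s/\lambda_2$ need not be an LST — and it is what licenses the use of Lemma~\ref{Cohen}/Lemma~\ref{Lemma 4.5new} and then Lemma~\ref{lemma-Asmussen-poisson} via $H_2|H_1=0\stackrel{\rm d}{=}N_{\lambda_2}(T_\gamma)$. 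Second, the paper's Lemma~\ref{proposition-A1} treats non-integer and integer $a$ separately, expanding both the numerator $\beta(s+\lambda_1)-\beta(\lambda_1)\alpha(s)$ \emph{and} the factor $1/\alpha(s)$; the latter also carries an irregular $s^{a}L(1/s)$ term, but because the numerator vanishes at $s=0$ that contribution lands at order $s^{a+1}$ and is harmless. Your statement that the irregular behaviour is ``driven entirely by $1-h(z_2)$'' is therefore correct in its conclusion, but as stated it glosses over why the $1/h(z_2)$ factor does not contribute at the leading irregular order. Fill in Lemma~\ref{lem:gamma is LST} and the case analysis, and the fallback becomes the paper's proof.
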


To prove this proposition, we need the following two lemmas (Lemma~\ref{lem:gamma is LST} and Lemma~\ref{proposition-A1}).
Note that $H_2|H_1=0$ has the PGF $E(z_2^{H_2}|H_1=0)= H(0,z_2)/H(0,1)$.
Setting $z_1=0$ in (\ref{G2-decomp-H-x}) and noting $h(z_2)=\alpha(\lambda_2-\lambda_2 z_2)$, we obtain
\begin{eqnarray}\label{PGF-Hbeta12}
E(z_2^{H_2}|H_1=0)
=\frac 1 {1-\beta(\lambda_1)}\cdot \frac {\beta(\lambda-\lambda_1\alpha(\lambda_2-\lambda_2 z_2)-\lambda_2 z_2)-\beta(\lambda-\lambda_2 z_2)} {\alpha(\lambda_2-\lambda_2 z_2)}=\gamma(\lambda_2-\lambda_2 z_2),
\end{eqnarray}
where
\begin{eqnarray}
\gamma(s) = \frac 1 {1-\beta(\lambda_1)}\cdot \frac {\beta(s+\lambda_1-\lambda_1\alpha(s))-\beta(s+\lambda_1)} {\alpha(s)}.\label{Apdx-1}
\end{eqnarray}

\begin{lemma} \label{lem:gamma is LST}
$\gamma(s)$ is the LST of a probability distribution on $[0,\infty)$.
\end{lemma}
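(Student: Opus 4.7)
The plan is to exhibit $\gamma(s)$ as an explicit convex combination of products of LSTs; such a mixture is automatically the LST of a probability distribution on $[0,\infty)$.

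First, I would exploit the busy period functional equation~(\ref{busy-eqn-alpha}), $\alpha(s)=\beta(s+\lambda_1-\lambda_1\alpha(s))$, to collapse the numerator in~(\ref{Apdx-1}):
$$\gamma(s)=\frac{\alpha(s)-\beta(s+\lambda_1)}{(1-\beta(\lambda_1))\,\alpha(s)}.$$
Next, I would condition on $N_{\lambda_1}(T_\beta)$, the number of arrivals during the first service of a busy period. Setting
$$b_k(s)=\int_0^{\infty}e^{-(s+\lambda_1)t}\,\frac{(\lambda_1 t)^k}{k!}\,dF_\beta(t),\qquad p_k=b_k(0)=P\{N_{\lambda_1}(T_\beta)=k\},$$
the standard busy-period decomposition $T_\alpha = T_\beta+\sum_{i=1}^{N_{\lambda_1}(T_\beta)}T_\alpha^{(i)}$ gives $\alpha(s)=\sum_{k=0}^{\infty}b_k(s)\alpha(s)^k$ with $b_0(s)=\beta(s+\lambda_1)$. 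Subtracting and dividing yields
$$\gamma(s)=\sum_{k=1}^{\infty}\frac{p_k}{1-\beta(\lambda_1)}\cdot\frac{b_k(s)}{p_k}\cdot\alpha(s)^{k-1}.$$

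Finally, I would identify the three factors inside the sum: $\{p_k/(1-\beta(\lambda_1))\}_{k\ge 1}$ is a bona fide probability mass function on $\{1,2,\ldots\}$ (the law of $N_{\lambda_1}(T_\beta)$ conditioned on being positive, since $\sum_{k\ge 1}p_k=1-\beta(\lambda_1)$); $b_k(s)/p_k$ is the LST of the conditional distribution of $T_\beta$ given $N_{\lambda_1}(T_\beta)=k$; and $\alpha(s)^{k-1}$ is the LST of a sum of $k-1$ i.i.d.\ copies of $T_\alpha$. Their product is the LST of an independent sum, so each summand is an LST, and the whole is a mixture. Checking $\gamma(0)=\sum_{k\ge 1}p_k/(1-\beta(\lambda_1))=1$ confirms normalization. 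Probabilistically, $\gamma(s)$ is the LST of the random variable obtained by first drawing $K\ge 1$ from the size-biased-positive law $p_k/(1-\beta(\lambda_1))$, then forming the sum of a service time conditioned on producing $K$ arrivals and $K-1$ independent busy periods.

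I do not expect serious obstacles: the only non-trivial step is spotting that the busy-period equation makes the numerator simplify to $\alpha(s)-\beta(s+\lambda_1)$, after which the conditioning-on-arrivals expansion of $\alpha(s)$ delivers the mixture representation essentially for free. Trying instead to verify complete monotonicity directly would be painful, because $\alpha$ is only implicitly defined, so the decomposition route is preferable.
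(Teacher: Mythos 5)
Your proof is correct, and it does buy something the paper's proof does not. Both you and the paper start from the same key identity — the expansion of $\beta(s+\lambda_1-\lambda_1\alpha(s))-\beta(s+\lambda_1)$ into a series in powers of $\alpha(s)$. In the paper this is phrased as a Taylor expansion around $s+\lambda_1$, giving (\ref{gamma(s)-Taylor}); in your write-up it is phrased as the busy-period decomposition $\alpha(s)=\sum_{k\ge 0} b_k(s)\alpha(s)^k$. These are the same thing, since $b_k(s)=\tfrac{(-\lambda_1)^k}{k!}\beta^{(k)}(s+\lambda_1)$. Where the two arguments genuinely diverge is the final step. The paper verifies complete monotonicity of each summand (Facts 1 and 2, Criterion A.1) and then invokes Feller's characterization of LSTs. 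You instead recognize each summand as $\tfrac{p_k}{1-\beta(\lambda_1)}$ times the LST $\tfrac{b_k(s)}{p_k}\,\alpha(s)^{k-1}$, so that $\gamma(s)$ is manifestly a convex combination of LSTs and hence an LST — no complete-monotonicity machinery needed. Your version is more elementary and also produces a concrete probabilistic description of the random variable $T_\gamma$ (draw $K$ from the law of $N_{\lambda_1}(T_\beta)$ conditioned on $K\ge 1$, then sum a service time conditioned on producing $K$ arrivals plus $K-1$ independent busy periods), which the paper's argument does not supply. Your closing speculation that verifying complete monotonicity would be painful is not quite borne out: the paper does use complete monotonicity, but only term-by-term on the same series you use, so it is not materially harder — just less informative. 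One small technicality worth a sentence in a clean write-up: if $p_k=0$ for some $k$ the factor $b_k(s)/p_k$ is formally undefined, but then $0\le b_k(s)\le b_k(0)=p_k=0$ forces that term to vanish identically, so the mixture representation is unaffected.
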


\proof
By Theorem~1 in Feller~\cite{Feller1971} (see p.439),
it is true iff $\gamma(0)=1$ and $\gamma(s)$ is completely monotone, i.e., $\gamma(s)$ possesses derivatives of all orders such that $(-1)^{n}\frac {d^n}{ds^n}\gamma(s)\ge 0$ for $s> 0$, $n\ge 0$. It is easy to check by (\ref{Apdx-1}) that $\gamma(0)=1$. Next, we verify that $\gamma(s)$ is completely monotone. The following criterion is often used to verify that a function is completely monotone.

\noindent{\bf Criterion A.1}\ (p.441 in \cite{Feller1971}) If $\vartheta_1(\cdot)$ and $\vartheta_2(\cdot)$ are completely monotone so is their product $\vartheta_1(\cdot)\vartheta_2(\cdot)$.

By using Taylor expansion, we have from (\ref{Apdx-1}) that
\begin{eqnarray}
\gamma(s) = \frac 1 {1-\beta(\lambda_1)}\cdot \sum_{k=1}^{\infty}\frac {\beta^{(k)}(s+\lambda_1)} {k!}(-1)^k\lambda_1^k(\alpha(s))^{k-1}. \label{gamma(s)-Taylor}
\end{eqnarray}

\noindent{\it Fact 1.} Since $\beta(s)$  is completely monotone, we know that $(-1)^k \beta^{(k)}(s+\lambda_1)$, $k\ge 1$, are completely monotone by the definition of completely monotone functions.

\noindent{\it Fact 2.} Since $\alpha(s)$  is completely monotone, we know that $(\alpha(s))^{k}$, $k\ge 1$, are completely monotone by using Criterion A.1.

By (\ref{gamma(s)-Taylor}), Facts 1 and 2, and applying Criterion A.1 and the definition of completely monotone functions, we know that $\gamma(s)$ is completely monotone. Therefore, it is the LST of a probability distribution.
\pend

\begin{remark}\label{gamma-compound-possion}
Let $T_{\gamma}$ be a r.v. whose probability distribution has the LST $\gamma(s)$.
Then the expression $E(z_2^{H_2}|H_1=0)=\gamma(\lambda_2-\lambda_2 z_2)$,  in (\ref{PGF-Hbeta12}), implies that $H_2|H_1=0\stackrel{\rm d}{=}N_{\lambda_2}(T_{\gamma})$.
\end{remark}

\begin{lemma}\label{proposition-A1}
As $t\to\infty$,
\begin{eqnarray}
P\{T_{\gamma}>t\}&\sim&\frac {\beta(\lambda_1)} {1-\beta(\lambda_1)}\cdot c_{\alpha} t^{-a}L(t),
\end{eqnarray}
where $c_{\alpha}=1/(1-\rho_1)^{n+1}$.
\end{lemma}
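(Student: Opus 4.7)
The plan is to exploit the functional equation $\alpha(s)=\beta(s+\lambda_1-\lambda_1\alpha(s))$ to rewrite $\gamma(s)$ in a form whose regularly-varying behaviour near $s=0$ is transparent, and then to invoke the same Tauberian framework (Lemma~\ref{Cohen} and Lemma~\ref{Lemma 4.5new}) used in the proof of Lemma~\ref{theorem-T-tail}. Substituting the defining equation for $\alpha(s)$ into the numerator of (\ref{Apdx-1}) gives the compact form
$$\gamma(s)=\frac{\alpha(s)-\beta(s+\lambda_1)}{(1-\beta(\lambda_1))\alpha(s)},$$
so that, after a short algebraic manipulation,
$$1-\gamma(s)=\frac{\beta(\lambda_1)\bigl[1-\alpha(s)\bigr]+\bigl[\beta(s+\lambda_1)-\beta(\lambda_1)\bigr]}{(1-\beta(\lambda_1))\alpha(s)}.$$

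The key structural observation is that $\beta(s+\lambda_1)-\beta(\lambda_1)$ is analytic at $s=0$ (since $\lambda_1>0$ places the evaluation away from the origin), so it admits a Taylor expansion in integer powers of $s$ with all moments finite and contributes \emph{no} regularly-varying component. The denominator $(1-\beta(\lambda_1))\alpha(s)$ tends to $1-\beta(\lambda_1)$ and its heavy-tail contribution is of the same order as that of $\alpha(s)$. Consequently the whole regularly-varying part of $1-\gamma(s)$ is carried by the factor $\beta(\lambda_1)[1-\alpha(s)]/(1-\beta(\lambda_1))$; I expect this to translate directly into $P\{T_\gamma>t\}\sim \frac{\beta(\lambda_1)}{1-\beta(\lambda_1)}P\{T_\alpha>t\}$, which is the claim with $c_\alpha=1/(1-\rho_1)^{a+1}$ (Lemma~\ref{Lemma 2.1}).

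To make this rigorous I would mirror the two-case analysis of Lemma~\ref{theorem-T-tail}. For non-integer $a$ with $m=\lfloor a\rfloor$, the moments $\gamma_k$ and $\alpha_k$ exist for $k\leq m-1$; defining $\gamma_{m-1}(s)$ and $\alpha_{m-1}(s)$ via (\ref{phi1}) and expanding the displayed formula for $1-\gamma(s)$ up to order $m-1$, the analytic contribution from $\beta(s+\lambda_1)-\beta(\lambda_1)$ is absorbed into the polynomial part, yielding
$$\gamma_{m-1}(s)=\frac{\beta(\lambda_1)}{1-\beta(\lambda_1)}\,\alpha_{m-1}(s)+o(s^{a-1}L(1/s))\quad \text{as } s\downarrow 0.$$
Lemma~\ref{Cohen} applied to $\alpha$ gives $\alpha_{m-1}(s)\sim [\Gamma(a-m)\Gamma(m+1-a)/\Gamma(a-1)]\,c_\alpha s^{a-1}L(1/s)$, and the inverse direction of Lemma~\ref{Cohen} then delivers the claimed tail for $T_\gamma$. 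For integer $a=m\geq 2$, the corresponding statement
$$\widehat{\gamma}_{m-2}(xu)-\widehat{\gamma}_{m-2}(u)\sim \frac{\beta(\lambda_1)}{1-\beta(\lambda_1)}\bigl[\widehat{\alpha}_{m-2}(xu)-\widehat{\alpha}_{m-2}(u)\bigr]$$
is obtained analogously (with $\widehat{\gamma}_{m-2}$, $\widehat{\alpha}_{m-2}$ defined via (\ref{phi2})), and Lemma~\ref{Lemma 4.5new} supplies the conclusion.

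The main obstacle I anticipate is the Taylor-remainder bookkeeping: one must verify carefully that the analytic pieces coming from $\beta(s+\lambda_1)-\beta(\lambda_1)$ and from the factor $1/\alpha(s)$ contribute only to the polynomial part of the expansion of $\gamma(s)$ up to order $m-1$ (or $m-2$ in the integer case), so that nothing pollutes the leading regularly-varying term. Once this verification is in place, the result follows from Lemma~\ref{Cohen}/Lemma~\ref{Lemma 4.5new} exactly as in Lemma~\ref{theorem-T-tail}.
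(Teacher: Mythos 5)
Your strategy mirrors the paper's own proof: rewrite $\gamma(s)=\frac{1}{1-\beta(\lambda_1)}\left[1-\frac{\beta(s+\lambda_1)}{\alpha(s)}\right]$ using (\ref{busy-eqn-alpha}), expand the analytic factor $\beta(s+\lambda_1)$ and $1/\alpha(s)$, observe that only a multiple of $\alpha_n(s)$ survives into the singular part of $\gamma(s)$ with coefficient $\frac{\beta(\lambda_1)}{1-\beta(\lambda_1)}$, and invoke Lemma~\ref{Cohen} (non-integer $a$) or Lemma~\ref{Lemma 4.5new} (integer $a$). Your identification of $c_\alpha=1/(1-\rho_1)^{a+1}$ is also correct (the exponent $n+1$ in the statement of the lemma is a typo for $a+1$).

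The one genuine flaw is a systematic off-by-one error in all your indices, apparently carried over from the proof of Lemma~\ref{theorem-T-tail}, where the relevant r.v.\ $T_\kappa$ has tail index $a-1$. Here $T_\alpha$ and $T_\gamma$ both have tail index $a$. With $m=\lfloor a\rfloor$ and $m<a<m+1$, the finite moments are $\alpha_k$ for $k\le m$ (not $k\le m-1$); the singular piece is captured by $\alpha_m(s)$, and Lemma~\ref{Cohen} gives $\alpha_m(s)\sim\big[\Gamma(a-m)\Gamma(m+1-a)/\Gamma(a)\big]\,c_\alpha\, s^{a}L(1/s)$. Your claimed asymptotic $\alpha_{m-1}(s)\sim[\cdots]\,s^{a-1}L(1/s)$ is false --- in fact $\alpha_{m-1}(s)\sim\frac{\alpha_m}{m!}s^{m}=o(s^{a-1})$ --- and feeding the corresponding $\gamma_{m-1}$ asymptotic into the converse direction of Lemma~\ref{Cohen} would output a tail $\sim t^{-(a-1)}L(t)$, not the stated $t^{-a}L(t)$. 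In the integer case $a=n$ the correct object is $\widehat{\gamma}_{n-1}$ (and $\widehat{\alpha}_{n-1}$), not $\widehat{\gamma}_{n-2}$, since Lemma~\ref{Lemma 4.5new} pairs a tail $t^{-n}L(t)$ with $\widehat{\phi}_{n-1}$. With these indices corrected, your argument coincides with the paper's.
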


\proof
By (\ref{busy-eqn-alpha}), we can rewrite (\ref{Apdx-1}) as
\begin{eqnarray}
\gamma(s)=\frac 1 {1-\beta(\lambda_1)}\cdot \left[1 - \frac {\beta(s+\lambda_1)} {\alpha(s)}\right].\label{Apdx-1b}
\end{eqnarray}
In the following, we will divide the proof of Lemma~\ref{proposition-A1} into two parts, depending on whether $a>1$ is an integer or not.

\noindent{\it Case 1: Non-integer $a >1$.}
Suppose that $n<a<n+1$, $n\in\{1,2,\ldots\}$. Since $P\{T_{\alpha}>t\}\sim c_{\alpha} t^{-a}L(t)$, we have its moments $\alpha_{n}<\infty$ and $\alpha_{n+1}=\infty$.
Define $\alpha_{n}(s)$ in a manner similar to that in (\ref{phi1}). Therefore, $\alpha(s)=1+\sum_{k=1}^{n}\frac{\alpha_k}{k!}(-s)^k+(-1)^{n+1}\alpha_{n}(s)$.
By Lemma~\ref{Cohen}, we know that
\begin{equation}\label{alpha{m-1}(s)asym}
\alpha_{n}(s) \sim \big[\Gamma(a-n)\Gamma(n+1-a)/\Gamma(a)\big]\cdot c_{\alpha} s^{a}L(1/s),\quad s\downarrow 0.
\end{equation}
Noting that $\frac 1 {1-x}=1 +x +x^2+\cdots$ for $|x|<1$, we have
\begin{eqnarray}
\frac 1 {\alpha(s)} &=& 1+\sum_{k=1}^{n}v_k (-s)^k -(-1)^{n+1}\alpha_{n}(s)+O(s^{n+1}),
\end{eqnarray}
where $v_k,\ k=1,2,\ldots,n$ are constants.
Since $\beta(s+\lambda_1) =\beta(\lambda_1)+ \sum_{k=1}^{\infty}\frac{\beta^{(k)}(\lambda_1)}{k!} s^k$, we have, from (\ref{Apdx-1b}),
\begin{eqnarray}
\gamma(s)&=&1 +\sum_{k=1}^{n}u_k (-s)^k +(-1)^{n+1}\frac {\beta(\lambda_1)} {1-\beta(\lambda_1)}\alpha_{n}(s)+O(s^{n+1}),
\end{eqnarray}
where $u_k,\ k=1,2,\ldots,n$ are constants. Define $\gamma_{n}(s)$ in a manner similar to that in (\ref{phi1}). By Lemma~\ref{Cohen}, we know that
\begin{eqnarray}
\gamma_{n}(s)&=&\frac {\beta(\lambda_1)} {1-\beta(\lambda_1)}\alpha_{n}(s)+O(s^{n+1})\nonumber\\
&\sim& \big[\Gamma(a-n)\Gamma(n+1-a)/\Gamma(a)\big]\frac {\beta(\lambda_1)} {1-\beta(\lambda_1)}\cdot c_{\alpha} s^{a}L(1/s),\quad s\downarrow 0.
\end{eqnarray}
Then, by making use of Lemma \ref{Cohen}, we complete the proof of Lemma~\ref{proposition-A1} for non-integer $a >1$.
\\
\\
{\it Case 2: Integer $a >1$.}
Suppose that $a=n\in\{2,3,\ldots\}$.  Since $P\{T_{\alpha}>t\}\sim c_{\alpha} t^{-n}L(t)$, we know that $\alpha_{n-1}<\infty$, but, whether $\alpha_{n}$ is finite or not remains uncertain. This uncertainty is essentially determined by whether $\int_x^{\infty}t^{-1}L(t)dt$ is convergent or not. Define $\widehat{\alpha}_{n-1}(s)$ in a way similar to that in (\ref{phi2}). Then, $\alpha(s)=1+\sum_{k=1}^{n-1}\frac{\alpha_k}{k!}(-s)^k+(-s)^{n}\widehat{\alpha}_{n-1}(s)$, and by Lemma~\ref{Lemma 4.5new}, $\widehat{\alpha}_{n-1}(xs)-\widehat{\alpha}_{n-1}(s)\sim -(\log x) c_{\alpha} L(1/s)/(n-1)!$ as $s\downarrow 0$. Therefore,
\begin{eqnarray}
\frac 1 {\alpha(s)} &=& 1+\sum_{k=1}^{n}v_k (-s)^k -(-s)^{n}\widehat{\alpha}_{n-1}(s)+o(s^{n})
\end{eqnarray}
where $v_1,v_2,\cdots,v_n$ are constants.
Since $\beta(s+\lambda_1) =\beta(\lambda_1)+ \sum_{k=1}^{\infty}\frac{\beta^{(k)}(\lambda_1)}{k!} s^k$, we have, from (\ref{Apdx-1b}),
\begin{eqnarray}
\gamma(s)&=&1+\sum_{k=1}^{n}u_k (-s)^k +\frac {\beta(\lambda_1)} {1-\beta(\lambda_1)}\cdot (-s)^{n}
\widehat{\alpha}_{n-1}(s) +o(s^{n}),\label{case2gamma(s)}
\end{eqnarray}
where $u_1,u_2,\cdots,u_n$ are constants. Now, define $\widehat{\gamma}_{n-1}(s)$ in a way similar to that in (\ref{phi2}), then it follows from (\ref{case2gamma(s)}) that
\begin{eqnarray}
\widehat{\gamma}_{n-1}(s)= u_{n}+\frac {\beta(\lambda_1)} {1-\beta(\lambda_1)}
\cdot\widehat{\alpha}_{n-1}(s) +o(1).
\end{eqnarray}
Therefore,
\begin{equation}
\lim_{s\downarrow 0}\frac {\widehat{\gamma}_{n-1}(xs)-\widehat{\gamma}_{n-1}(s)} {L(1/s)/(n-1)!}
=\frac {\beta(\lambda_1)} {1-\beta(\lambda_1)} \lim_{s\downarrow 0}\frac {\widehat{\alpha}_{n-1}(xs)-\widehat{\alpha}_{n-1}(s)} { L(1/s)/(n-1)!} = -(\log x)\frac {\beta(\lambda_1)} {1-\beta(\lambda_1)} c_{\alpha}.
\end{equation}
Applying Lemma \ref{Lemma 4.5new}, we complete the proof of Lemma~\ref{proposition-A1} for integer $a=n\in\{2,3,\ldots\}$.
\pend

\bigskip
\noindent\underline{{\sc Proof} of Proposition~\ref{lemma-Hbeta12>j}:} It follows directly from Remark~\ref{gamma-compound-possion}, Lemma~\ref{proposition-A1} and Lemma~\ref{lemma-Asmussen-poisson}. \pend

\subsubsection{Tail probability $P\{M_{b2}>j|M_{b1}=0\}$}\label{subsec-Mb2>j|Mb1=0}

By Remark \ref{remark-M{b1}M{b2}}, $(M_{b1},M_{b2})$ can be decomposed to a geometric sum of r.v.s which have the same probability distribution as that for $(H_{1},H_{2})$.

Now we are ready to study $P\{M_{b2}>j|M_{b1}=0\}$. Let $h_0=P \{H_1=0 \}$. Recall (\ref{remark-M{b1}M{b2}-eqn1}).
Since $P \{\sum_{i=1}^{n}H_1^{(i)}=0 \}=P \{H_1^{(1)}=\cdots =H_1^{(n)}=0 \}= h_0^n$, we immediately have
\begin{eqnarray}
P\{M_{b1}=0,M_{b2}>j\}&=&\rho_1\sum_{n=1}^{\infty} (1-\rho_1)\rho_1^{n-1}
 h_0^n\cdot P\Big \{\sum_{i=1}^{n}H_2^{(i)}>j\big |\sum_{i=1}^{n}H_1^{(i)}=0\Big \}\nonumber\\
&\sim & \rho_1\sum_{n=1}^{\infty} (1-\rho_1)\rho_1^{n-1}
 h_0^n \cdot n P \{H_{2}>j|H_{1}=0 \},\label{P{M-b1=0b}}
\end{eqnarray}
where we used the fact (property of subexponential distributions) that $P \{\sum_{i=1}^{n}H_2^{(i)}>j\big |\sum_{i=1}^{n}H_1^{(i)}=0 \}\sim n P \{H_{2}>j|H_{1}=0 \}$ as $j\to\infty$. Furthermore, by (\ref{P{M-b1=0b}}) and (\ref{lemma-Hbeta12>j-formula}),
\begin{equation}\label{4.41}
P\{M_{b2}>j|M_{b1}=0\}\sim c_{M_b} \cdot j^{-a} L(j),\quad j\to\infty,
\end{equation}
where $c_{M_b}>0$ is a constant.
\subsubsection{Tail probability $P\{R_{orb}>j|I_{ser}=1,R_{que}=0\}$}
By Remark \ref{remark3-3}, (\ref{def-T-theta}) and using (\ref{P{T-kappa>t}}), we have
\begin{eqnarray}\label{4.42}
P\{M_c>j\} \sim P\{T_{\eta}>j/\lambda_2\} &=& \vartheta P\{T_{\kappa}>j/\lambda_2\}\nonumber\\
&\sim&\frac {\lambda_2^a} {(1-\rho)(a-1)(1-\rho_1)^{a}}\cdot j^{-a+1}L(j),\quad j\to\infty.
\end{eqnarray}
As pointed out earlier in (\ref{(R{11},R{12})decomp}), $(R_{11},R_{12})$ can be stochastically decomposed into the sum of four independent r.v.s, which leads to
\begin{equation}\label{4.43}
P\{R_{12}>j|R_{11}=0\} = P\{M_{a2} + M_{b2} + M_{c} + R_{0}>j|M_{a1}=0, M_{b1}=0\}.
\end{equation}
It follow from (\ref{4.42}), (\ref{P{R_0>j}sim}) and (\ref{4.41}) and Remark \ref{remark-lighted} that $P\{R_{0}>j\}=o(P\{M_{c}>j\})$, $P\{M_{b2}>j|M_{b1}=0\}=o(P\{M_{c}>j\})$ and $P\{M_{a2}>j|M_{a1}=0\}=o(P\{M_{c}>j\})$. Applying Lemma \ref{Lemma 3.4new} to (\ref{4.43}), we have
\begin{equation}
P\{R_{12}>j|R_{11}=0\}\sim P\{M_{c}>j\},\quad j\to\infty.
\end{equation}
Note that $P\{R_{orb}>j|I_{ser}=1,R_{que}=0\}\equiv P\{R_{12}>j|R_{11}=0\}$. By (\ref{4.42}),
\begin{equation}
P\{R_{orb}>j|I_{ser}=1,R_{que}=0\}\sim\frac {\lambda_2^a} {(1-\rho)(a-1)(1-\rho_1)^{a}}\cdot j^{-a+1}L(j),\quad j\to\infty.
\end{equation}

\section*{Acknowledgments}
This work was supported in part by the National Natural Science Foundation of China (Grant No. 71571002),
the Natural Science Foundation of the Anhui Higher Education Institutions of China (No. KJ2017A340),
the Research Project of Anhui Jianzhu University, and a Discovery Grant from the Natural Sciences and Engineering Research Council of Canada (NSERC).

\appendix

\section{Appendix}

\subsection{Definitions and useful results from the literature}

\begin{definition}[Bingham, Goldie and Teugels~\cite{Bingham:1989}]
\label{Definition 3.1}
A measurable function $U:(0,\infty)\to (0,\infty)$ is regularly varying at $\infty$ with index $\sigma\in(-\infty,\infty)$ (written $U\in \mathcal{R}_{\sigma}$) iff $\lim_{t\to\infty}U(xt)/U(t)=x^{\sigma}$ for all $x>0$. If $\sigma=0$ we call $U$ slowly varying, i.e., $\lim_{t\to\infty}U(xt)/U(t)=1$ for all $x>0$.
\end{definition}
\begin{definition}[Foss, Korshunov and Zachary~\cite{Foss2011}]
\label{Definition 3.2}
A distribution $F$ on $(0,\infty)$ belongs to the class of subexponential distribution (written $F\in \mathcal S$) if \space $\lim_{t\to\infty}\overline{F^{*2}}(t)/\overline{F}(t)=2$, where $\overline{F}=1-F$ and $F^{*2}$ denotes the second convolution of $F$.
\end{definition}

It is well known that for a distribution $F$ on $(0,\infty)$, if $\overline{F}$ is regularly varying with index $-\sigma$, $\sigma\ge 0$ or $\overline{F}\in \mathcal{R}_{-\sigma}$, then $F$ is subexponnetial or $F\in \mathcal S$
(see, e.g., Embrechts et al. \cite{Embrechts1997}).

\begin{lemma}[de Meyer and Teugels~\cite{Meyer-Teugels:1980}]
\label{Lemma 2.1}
Under Assumption A,
\begin{equation}\label{Meyer-Teugels-T{alpha}}
P\{T_{\alpha}>t\}\sim \frac 1 {(1-\rho_1)^{a+1}}\cdot t^{-a} L(t)\quad \mbox{as }t\to\infty.
\end{equation}
\end{lemma}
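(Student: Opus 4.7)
\noindent\textbf{Proof proposal for Lemma~\ref{Lemma 2.1}.}
The plan is to carry out a Tauberian analysis of the LST functional equation
\begin{equation*}
\alpha(s)=\beta(s+\lambda_1-\lambda_1\alpha(s))
\end{equation*}
from (\ref{busy-eqn-alpha}), in the same spirit as the proofs of Lemma~\ref{theorem-T-tail} and Lemma~\ref{proposition-A1}. Under Assumption~A, the LST $\beta(s)$ carries a non-analytic leading term of order $s^a L(1/s)$ near $s\downarrow 0$, and the strategy is to show that this non-analyticity is transferred to $\alpha(s)$ through the functional equation with an explicit multiplicative adjustment of $1/(1-\rho_1)^{a+1}$.

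I would first split into cases depending on whether $a$ is an integer. For non-integer $a$ with $m<a<m+1$, Lemma~\ref{Cohen} combined with Assumption~A gives
\begin{equation*}
\beta(s)=\sum_{k=0}^{m}\frac{\beta_k(-s)^k}{k!}+(-1)^{m+1}\beta_m(s),\qquad \beta_m(s)\sim\frac{\Gamma(a-m)\Gamma(m+1-a)}{\Gamma(a)}\cdot s^a L(1/s),
\end{equation*}
and I would aim at the analogous expansion for $\alpha(s)$ with a remainder $\alpha_m(s)\sim c_\alpha\cdot[\Gamma(a-m)\Gamma(m+1-a)/\Gamma(a)]\cdot s^a L(1/s)$, the unknown tail constant $c_\alpha$ being the quantity to be identified. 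For integer $a=m$, the hat-notation from (\ref{phi2}) and Lemma~\ref{Lemma 4.5new} would be used to track the logarithmic correction, exactly as in Case~2 of the proofs of Lemma~\ref{theorem-T-tail} and Lemma~\ref{proposition-A1}.

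Next I would introduce $u(s):=s+\lambda_1(1-\alpha(s))$ and decompose it as $u(s)=u_0(s)-\lambda_1(-1)^{m+1}\alpha_m(s)$, where $u_0(s):=s+\lambda_1(1-A(s))$ (with $A(s)$ the analytic part of $\alpha(s)$) is the analytic part of $u(s)$ and satisfies $u_0'(0)=1+\lambda_1\alpha_1=1/(1-\rho_1)$, hence $u_0(s)\sim s/(1-\rho_1)$. Writing $\alpha(s)=\beta(u(s))$, Taylor-expanding $\beta$ about $u_0(s)$ to first order in the non-analytic piece of $u(s)$, and using $\beta'(0)=-\beta_1$, the non-analytic pieces of the two sides balance at the leading order as
\begin{equation*}
(1-\rho_1)\,\alpha_m(s)\sim\beta_m(u_0(s))\sim\frac{\Gamma(a-m)\Gamma(m+1-a)}{\Gamma(a)}\cdot\frac{s^a L(1/s)}{(1-\rho_1)^{a}},
\end{equation*}
where the $(1-\rho_1)$ factor on the left absorbs the cross-term $\lambda_1\beta_1\,\alpha_m(s)$ coming from $\beta'(u_0(s))$, and the $(1-\rho_1)^{-a}$ on the right comes from $u_0(s)\sim s/(1-\rho_1)$ and the slow variation of $L$. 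Solving gives $c_\alpha=1/(1-\rho_1)^{a+1}$, and applying Lemma~\ref{Cohen} in the reverse (Tauberian) direction delivers $P\{T_\alpha>t\}\sim(1-\rho_1)^{-a-1}t^{-a}L(t)$.

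The main obstacle will be to justify rigorously the first-order Taylor expansion of $\beta(u(s))$ in the non-analytic piece of $u(s)$: specifically, one must show that $\beta_m(u_0(s)-\lambda_1(-1)^{m+1}\alpha_m(s))-\beta_m(u_0(s))=o(s^a L(1/s))$. This should follow from the local uniformity of the regular variation and Karamata's theorem (p.28 in \cite{Bingham:1989}), since the perturbation $\lambda_1\alpha_m(s)$ is of order $s^a L(1/s)=o(s)$ and the derivative of $\beta_m$ at scale $s$ is of order $s^{a-1}L(1/s)$, making the cross-contribution of order $s^{2a-1}L^2(1/s)=o(s^a L(1/s))$. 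The integer case demands additional care in propagating the slowly varying increments $\widehat{\alpha}_{m-1}(xs)-\widehat{\alpha}_{m-1}(s)$ and $\widehat{\beta}_{m-1}(xs)-\widehat{\beta}_{m-1}(s)$ through the change of variables $u_0(s)\sim s/(1-\rho_1)$, which is handled by Lemma~\ref{Lemma 4.5new}.
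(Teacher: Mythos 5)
The paper itself provides no proof of Lemma~\ref{Lemma 2.1}: it simply attributes the statement to the main theorem of de Meyer and Teugels~\cite{Meyer-Teugels:1980}. Your proposal attempts a direct Tauberian derivation of that cited result, in the same style as the paper's proofs of Lemma~\ref{theorem-T-tail} and Lemma~\ref{proposition-A1}. The constant bookkeeping checks out: with $u_0(s)\sim s/(1-\rho_1)$, matching the non-analytic parts in $\alpha(s)=\beta(u(s))$ yields $\alpha_m(s)(1+\lambda_1\beta'(u_0(s)))\sim\beta_m(u_0(s))$, hence $(1-\rho_1)\alpha_m(s)\sim\beta_m(u_0(s))$; and since $\beta_m(u_0(s))\sim[\Gamma(a-m)\Gamma(m+1-a)/\Gamma(a)]\cdot s^a L(1/s)/(1-\rho_1)^a$ (using slow variation of $L$ to replace $L((1-\rho_1)/s)$ by $L(1/s)$), this gives $c_\alpha=(1-\rho_1)^{-a-1}$. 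Your cross-term estimate $\beta_m(u_0(s)-\delta(s))-\beta_m(u_0(s))=o(s^a L(1/s))$ is also sound for $a>1$.

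The one genuine gap is that you \emph{postulate} the regular variation of the remainder, writing $\alpha_m(s)\sim c_\alpha\cdot[\Gamma(a-m)\Gamma(m+1-a)/\Gamma(a)]\cdot s^a L(1/s)$ with $c_\alpha$ to be found, and then solve for $c_\alpha$. The balance equation is a self-consistency relation: it identifies the constant if regular variation is assumed, but it does not by itself show that $\alpha_m(s)/(s^a L(1/s))$ has a limit as $s\downarrow 0$. Establishing that regular variation actually propagates from $\beta$ to $\alpha$ through the implicit equation $\alpha(s)=\beta(s+\lambda_1-\lambda_1\alpha(s))$ is the substantive content of de Meyer and Teugels' theorem and is not delivered by a formal Taylor-balance argument; without it, the Tauberian implication of Lemma~\ref{Cohen} (respectively Lemma~\ref{Lemma 4.5new} for integer $a$) cannot be invoked. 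As a reconstruction of the proof behind the citation, your sketch is heuristically correct and identifies the right constant, but to stand alone it would need to first prove that $P\{T_{\alpha}>t\}$ is regularly varying with index $-a$.
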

The result (\ref{Meyer-Teugels-T{alpha}}) is straightforward due to the main theorem in \cite{Meyer-Teugels:1980}.

\begin{lemma}[pp.580--581 in \cite{Embrechts1997}]
\label{Embrechts-compound-geo}
Let $N$ be a discrete non-negative integer-valued r.v. with $p_k=P\{N=k\}$ such that for some $\varepsilon>0$, $\sum_{k=0}^{\infty} p_k(1+\varepsilon)^k<\infty$, and let
$\{Y_k\}_{k=1}^{\infty}$ be a sequence of non-negative, i.i.d. r.v.s having a common subexponential distribution $F$.
Define $S_n=\sum_{k=1}^n Y_k$. Then $P\{S_N > t\} \sim E(N)\cdot (1-F(t))$ as $t\to \infty$.
\end{lemma}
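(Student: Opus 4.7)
The plan is to prove this classical tail result for compound random sums by first conditioning on $N$, then exploiting the subexponential property of $F$ summand-by-summand, and finally exchanging limit and infinite sum by dominated convergence using a uniform bound of Kesten type.

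First I would write, using the (implicit) independence of $N$ from the sequence $\{Y_k\}$,
\begin{equation*}
\frac{P\{S_N>t\}}{\overline F(t)} \;=\; \sum_{n=0}^{\infty} p_n \, \frac{\overline{F^{*n}}(t)}{\overline F(t)} ,
\end{equation*}
so the whole task reduces to controlling the ratios $\overline{F^{*n}}(t)/\overline F(t)$ jointly in $n$ and $t$. The pointwise part is routine: since $F\in\mathcal S$, the defining property (Definition~\ref{Definition 3.2}), extended by induction, gives
\begin{equation*}
\lim_{t\to\infty}\frac{\overline{F^{*n}}(t)}{\overline F(t)} = n \quad \text{for each fixed } n\ge 1,
\end{equation*}
which, if I were writing it out, I would establish by the standard splitting $\{S_n>t\} = \{\max_{i\le n}Y_i >t\}\cup \{\max Y_i\le t, S_n>t\}$ together with the subexponential tail-independence estimate.

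The heart of the argument is the interchange of limit and summation, and this is the step I expect to be the main obstacle. The classical tool is Kesten's lemma: for any $F\in\mathcal S$ and any $\varepsilon>0$, there exists $K=K(\varepsilon)>0$ such that
\begin{equation*}
\frac{\overline{F^{*n}}(t)}{\overline F(t)} \;\le\; K(1+\varepsilon)^n \quad \text{for all } n\ge 0,\, t\ge 0.
\end{equation*}
I would cite this from Foss--Korshunov--Zachary~\cite{Foss2011} rather than reprove it; the proof itself proceeds by induction on $n$, using the convolution identity $\overline{F^{*(n+1)}}(t) = \overline F(t) + \int_0^t \overline{F^{*n}}(t-y)\,dF(y)$ and the uniform control $\sup_{y\le t}\overline F(t-y)/\overline F(t)\to 1$ that holds on $\mathcal S$.

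Given Kesten's bound, I would pick $\varepsilon>0$ small enough that the hypothesis $\sum_{k\ge 0} p_k (1+\varepsilon)^k<\infty$ holds, so that $n\mapsto p_n K(1+\varepsilon)^n$ is a summable dominant for the integrand in the series representation above. Dominated convergence then yields
\begin{equation*}
\lim_{t\to\infty}\frac{P\{S_N>t\}}{\overline F(t)} \;=\; \sum_{n=0}^{\infty} n p_n \;=\; E(N),
\end{equation*}
which is the claim. Note that the moment generating function assumption on $N$ is used only to activate this dominated convergence step; any lighter growth condition on $p_n$ that is compatible with Kesten's exponential bound would work, but the stated exponential moment condition is the clean sufficient condition one finds in the literature.
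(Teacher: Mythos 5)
The paper does not prove this lemma itself; it cites it directly from Embrechts, Kl\"uppelberg and Mikosch (1997, pp.~580--581). Your reconstruction --- condition on $N$, use $\overline{F^{*n}}(t)/\overline F(t)\to n$ from subexponentiality, and interchange limit and sum via Kesten's uniform bound $\overline{F^{*n}}(t)/\overline F(t)\le K(1+\varepsilon)^n$ together with the exponential-moment hypothesis on $N$ --- is precisely the standard proof given in that reference (and in Foss--Korshunov--Zachary), and it is correct.
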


Two special cases:

\noindent (1) if $p_k=(1-\sigma)\sigma^{k-1}$, $0<\sigma<1$, $k\ge 1$, then $P\{S_N > t\} \sim (1-F(t))/(1-\sigma)$ as $t\to \infty$;

\noindent (2) if $p_k=\frac {\sigma^k} {k!} e^{-\sigma}$, $\sigma>0$, $k\ge 0$, then $P\{S_N > t\} \sim \sigma\cdot (1-F(t))$ as $t\to \infty$.

\begin{lemma}[Proposition~3.1 in  \cite{Asmussen-Klupperlberg-Sigman:1999}]
\label{lemma-Asmussen-poisson} Let $N_{\lambda}(t)$ be a Poison process with rate $\lambda$ and let $T$ be a positive r.v. with distribution $F$, which is independent of $N_{\lambda}(t)$.
If $\bar{F}(t)=P\{T>t\}$ is heavier than $e^{-\sqrt{t}}$ as $t\to\infty$, then $P(N_{\lambda}(T)>j)\sim P\{T>j/\lambda\}$ as $j\to\infty$.
\end{lemma}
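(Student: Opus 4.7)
The plan is to exploit the strong concentration of the Poisson counting process around its mean: conditionally on $T=t$, the random variable $N_\lambda(t)$ is concentrated near $\lambda t$ on an exponential scale, so the event $\{N_\lambda(T)>j\}$ should be asymptotically indistinguishable from $\{\lambda T>j\}=\{T>j/\lambda\}$. Concretely, I would split on whether $T$ is below or above $j/\lambda$, dominate the ``wrong-side'' contribution by a Chernoff bound for the Poisson, and then absorb that contribution using the hypothesis that $\bar F$ is heavier than $e^{-\sqrt{t}}$.

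\noindent\textbf{Step 1 (Chernoff estimate).} For each fixed $\epsilon\in(0,1)$ I would show that there is a constant $c(\epsilon)>0$ such that, uniformly in $t$,
\begin{equation*}
P\bigl(N_\lambda(t)\ge j\bigr)\le e^{-c(\epsilon) j}\ \text{ for }t\le (1-\epsilon)j/\lambda,\qquad
P\bigl(N_\lambda(t)\le j\bigr)\le e^{-c(\epsilon) j}\ \text{ for }t\ge (1+\epsilon)j/\lambda.
\end{equation*}
This is the standard Cram\'er transform of a Poisson variable (rate function $(1+\delta)\log(1+\delta)-\delta$).

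\noindent\textbf{Step 2 (sandwich).} Conditioning on $T$ and using Step 1, I would write
\begin{align*}
P\bigl(N_\lambda(T)>j\bigr)
&=\int_0^{(1-\epsilon)j/\lambda}P(N_\lambda(t)>j)\,dF(t)+\int_{(1-\epsilon)j/\lambda}^{\infty}P(N_\lambda(t)>j)\,dF(t)\\
&\le e^{-c(\epsilon)j}+\bar F\bigl((1-\epsilon)j/\lambda\bigr),
\end{align*}
and symmetrically
\begin{equation*}
P\bigl(N_\lambda(T)>j\bigr)\ge \int_{(1+\epsilon)j/\lambda}^{\infty}P(N_\lambda(t)>j)\,dF(t)\ge \bar F\bigl((1+\epsilon)j/\lambda\bigr)-e^{-c(\epsilon)j}.
\end{equation*}
The hypothesis ``$\bar F$ heavier than $e^{-\sqrt t}$'' gives $\bar F(t)\,e^{\sqrt t}\to\infty$, hence $e^{-c(\epsilon)j}/\bar F(j/\lambda)\le e^{-c(\epsilon)j+\sqrt{j/\lambda}}\cdot[\bar F(j/\lambda)e^{\sqrt{j/\lambda}}]^{-1}\to 0$, so the exponentially small error terms are $o\bigl(\bar F(j/\lambda)\bigr)$ for every fixed $\epsilon$.

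\noindent\textbf{Step 3 (removing $\epsilon$) and the main obstacle.} Dividing by $\bar F(j/\lambda)$ and letting $j\to\infty$ first and then $\epsilon\downarrow 0$, the claim follows provided
\begin{equation*}
\lim_{\epsilon\downarrow 0}\limsup_{j\to\infty}\frac{\bar F((1\pm\epsilon)j/\lambda)}{\bar F(j/\lambda)}=1.
\end{equation*}
This is the step that is not automatic from the bare ``heavier than $e^{-\sqrt t}$'' assumption, and it is the real technical obstacle; it is typically extracted from a \emph{long-tailed} or \emph{subexponential} regularity property of $\bar F$ (both implied by the regularly varying tail assumed in the paper, via Definition~\ref{Definition 3.1}). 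An alternative that avoids the double limit is to let $\epsilon=\epsilon_j\downarrow 0$ at a controlled rate: choose $\epsilon_j$ so that the quadratic Cram\'er rate still dominates, $\epsilon_j^{2}j\gg\sqrt{j/\lambda}$ (e.g.\ $\epsilon_j=j^{-1/5}$), so that Step~2's error terms remain $o(\bar F(j/\lambda))$ while $\bar F((1\pm\epsilon_j)j/\lambda)\sim\bar F(j/\lambda)$ is forced by the long-tail property of $\bar F$. Assembling these pieces yields $P(N_\lambda(T)>j)\sim\bar F(j/\lambda)$, as required.
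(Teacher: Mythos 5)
The paper does not give its own proof of this lemma: it is imported verbatim from Asmussen, Kl\"uppelberg and Sigman~\cite{Asmussen-Klupperlberg-Sigman:1999} (Proposition~3.1 there), and the only accompanying comment is the one-line remark that a regularly varying tail is heavier than $e^{-\sqrt t}$ so the hypothesis is met. There is therefore no in-text argument to compare yours against; your proposal has to stand on its own. Structurally it is the right (and standard) decomposition: condition on $T$, split on $\{T\lessgtr(1\pm\epsilon)j/\lambda\}$, and kill the wrong side with a Poisson Cram\'er/Chernoff bound. Steps~1 and~2 are correct as written, and your bookkeeping with $e^{-c(\epsilon)j}/\bar F(j/\lambda)\to 0$ under $\bar F(t)e^{\sqrt t}\to\infty$ is fine.

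The gap you flag in Step~3 is real, but your two suggested escapes do not actually follow from the regularity you invoke. Long-tailedness only gives $\bar F(t-y)\sim\bar F(t)$ for \emph{fixed} $y$; it controls neither a fixed multiplicative shift $\bar F((1-\epsilon)t)/\bar F(t)$ (e.g.\ $\bar F(t)=e^{-\sqrt t}$ is long-tailed but $\bar F((1-\epsilon)t)/\bar F(t)\to e^{(1-\sqrt{1-\epsilon})\sqrt t}\to\infty$), nor a shift $\epsilon_j\cdot j/\lambda\asymp j^{4/5}$ as in your $\epsilon_j=j^{-1/5}$ variant, since that grows without bound. Nor does the bare ``heavier than $e^{-\sqrt t}$'' hypothesis supply this: $e^{-\sqrt t}$ itself is heavier than $e^{-\sqrt t}$ in the $\liminf$ sense yet is not even square-root insensitive, so the lemma as literally worded in the paper is a shorthand, not a self-contained condition. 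What closes Step~3 in the setting of this paper is Assumption~A: under regular variation of index $-a$, the uniform convergence theorem gives $\bar F((1\mp\epsilon_j)j/\lambda)/\bar F(j/\lambda)\to 1$ whenever $\epsilon_j\to 0$ (or, in the fixed-$\epsilon$ version, $\bar F((1\mp\epsilon)j/\lambda)/\bar F(j/\lambda)\to(1\mp\epsilon)^{-a}\to 1$ as $\epsilon\downarrow 0$). So replace ``long-tailed/subexponential regularity'' by ``regular variation (Assumption~A), via the uniform convergence theorem for $\mathcal R_{-a}$ functions,'' and the proof is complete for the case actually used in the paper.
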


Lemma~\ref{lemma-Asmussen-poisson} holds for any distribution $F$ with a regularly varying tail because it is heavier than $e^{-\sqrt{t}}$ as $t\to\infty$.

\begin{lemma}[p.48 in \cite{Foss2011}]
\label{Lemma 3.4new}
Let $F$, $F_1$ and $F_2$ be distribution functions.
Suppose that $F\in\mathcal S$.
If $\bar{F}_i(t)/\bar{F}(t)\to c_i$ as $t\to\infty$ for some $c_i\ge 0, \; i=1,2$, then
$\overline{F_1*F}_2(t)/\bar{F}(t)\to c_1+c_2$ as $t\to\infty$,
where the symbol $\bar{F}\stackrel{\rm def}{=}1-F$ and  ``$F_1*F_2$" stands for the convolution of $F_1$ and $F_2$.
\end{lemma}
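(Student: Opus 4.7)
The plan is to prove Lemma~\ref{Lemma 3.4new} via the classical conditioning decomposition for the tail of a convolution, leaning on two hallmark features of $F\in\mathcal S$: the long-tailed property $\bar F(t-y)/\bar F(t)\to 1$ for each fixed $y$ (a standard consequence of subexponentiality), together with the fact that $\overline{F*F}(t)/\bar F(t)\to 2$, which upon decomposition says that the ``middle'' of a self-convolution integral is negligible. With $X_1\sim F_1$ and $X_2\sim F_2$ independent, I would start from the identity $\overline{F_1*F_2}(t)=\bar F_1(t)+\int_0^t \bar F_2(t-y)\,dF_1(y)$. Dividing by $\bar F(t)$, the first term tends to $c_1$ by hypothesis, so the task reduces to showing that the integral divided by $\bar F(t)$ tends to $c_2$.

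Next I would fix a large auxiliary constant $A>0$ and split the range of integration into the near, middle, and far pieces $[0,A]$, $(A,t-A]$, and $(t-A,t]$. On $[0,A]$ the long-tailed property of $F$, combined with $\bar F_2(x)/\bar F(x)\to c_2$, gives $\bar F_2(t-y)/\bar F(t)\to c_2$ uniformly in $y\in[0,A]$, so this piece contributes $c_2\,F_1(A)\,\bar F(t)\,(1+o(1))$. On $(t-A,t]$ the crude bound $\bar F_2(t-y)\le 1$ yields the upper estimate $\bar F_1(t-A)-\bar F_1(t)$, which divided by $\bar F(t)$ is $o(1)$ because both $\bar F_1(t-A)/\bar F(t)$ and $\bar F_1(t)/\bar F(t)$ converge to the common value $c_1$ (using the long-tailedness of $F$ for the first).

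The main obstacle is controlling the middle piece $J_A(t):=\int_A^{t-A}\bar F_2(t-y)\,dF_1(y)$, where neither $y$ nor $t-y$ stays bounded. I would first establish a global comparison $\bar F_i(x)\le C\,\bar F(x)$ valid for all $x\ge 0$, which holds because $\bar F_i/\bar F\to c_i$ in the tail and $\bar F$ is bounded away from zero on compact sets. Feeding this into $J_A(t)$, the integral is dominated up to constants by $\int_A^{t-A}\bar F(t-y)\,dF(y)$, which the subexponentiality of $F$ forces to be $o(\bar F(t))$ in the iterated limit (first $t\to\infty$, then $A\to\infty$): indeed this is precisely the residual term in the usual decomposition of $\overline{F*F}(t)$ after extracting the two end-pieces that account for the factor $2$. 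A short sandwich argument using tail equivalence and the long-tail property then replaces $\bar F(t-y)\,dF(y)$ by $\bar F_2(t-y)\,dF_1(y)$ without changing the conclusion.

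Combining the three pieces and letting $A\to\infty$ so that $F_1(A)\to 1$ yields $\overline{F_1*F_2}(t)/\bar F(t)\to c_1+c_2$ as required. The delicate part throughout is managing the order of limits in $t$ and $A$ and handling the degenerate cases $c_1=0$ or $c_2=0$ via the uniform comparison $\bar F_i\le C\bar F$ rather than by tail equivalence alone, since tail equivalence on its own need not give any useful pointwise control when the limiting constant is zero.
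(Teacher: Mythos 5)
The paper itself gives no proof of this lemma---it is cited to Foss, Korshunov and Zachary---so your attempt must stand on its own. Your decomposition $\overline{F_1*F_2}(t)=\bar F_1(t)+\int_0^t\bar F_2(t-y)\,dF_1(y)$, the three-way split of the integral at $A$ and $t-A$, and your treatment of the near piece $[0,A]$ (using uniform convergence on compacts from long-tailedness) and the far piece $(t-A,t]$ (using $\bar F_1(t-A)/\bar F(t)\to c_1$ and $\bar F_1(t)/\bar F(t)\to c_1$) are all correct and standard. The gap is in the middle piece.

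You claim that ``feeding'' the global bound $\bar F_i\le C\bar F$ into $J_A(t)=\int_A^{t-A}\bar F_2(t-y)\,dF_1(y)$ dominates it, up to constants, by $\int_A^{t-A}\bar F(t-y)\,dF(y)$. This does not follow. Applying $\bar F_2\le C\bar F$ only replaces the \emph{integrand} and yields $J_A(t)\le C\int_A^{t-A}\bar F(t-y)\,dF_1(y)$; the measure of integration is still $dF_1$, not $dF$. A tail comparison $\bar F_1\le C\bar F$ gives no control on the local mass $dF_1$ (for instance, $F_1$ can concentrate arbitrarily much mass near $t/2$ while $F$ concentrates next to none there, and yet still satisfy $\bar F_1\le C\bar F$). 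The closing appeal to a ``short sandwich argument'' that ``replaces $\bar F(t-y)\,dF(y)$ by $\bar F_2(t-y)\,dF_1(y)$'' is exactly the missing step, asserted rather than proved. The standard way to close this is an integration by parts that moves the differential onto $F$:
\begin{equation*}
\int_A^{t-A}\bar F(t-y)\,dF_1(y)\;=\;\bar F(t-A)\bar F_1(A)\;-\;\bar F(A)\bar F_1(t-A)\;+\;\int_A^{t-A}\bar F_1(t-u)\,dF(u).
\end{equation*}
Now the reference measure $dF$ appears, so $\bar F_1(t-u)\le C\bar F(t-u)$ reduces the last integral to $C\int_A^{t-A}\bar F(t-u)\,dF(u)$, which divided by $\bar F(t)$ tends (first $t\to\infty$, then $A\to\infty$) to $0$ precisely because $F\in\mathcal S$; and the boundary terms divided by $\bar F(t)$ tend to $\bar F_1(A)-c_1\bar F(A)\to 0$ as $A\to\infty$ using long-tailedness and the tail limits. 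Without this transfer of roles between integrand and measure, your middle-piece estimate does not close.
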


To prove Lemma \ref{theorem-T-tail} and Lemma \ref{proposition-A1}, we list some notations and results.
Let $F(x)$ be any distribution on $[0,\infty)$ with the LST $\phi(s)$.
We denote the $n$th moment of $F(x)$ by $\phi_n$, $n\ge 0$.
It is well known that $\phi_n<\infty$ iff
\begin{equation}
\phi(s)=\sum_{k=0}^{n}\frac{\phi_k}{k!}(-s)^k + o(s^n),\quad n\ge 0.
\end{equation}
Next, if $\phi_n<\infty$, we introduce the notation $\phi_n(s)$ and $\widehat{\phi}_n(s)$, defined by
\begin{eqnarray}
\phi_n(s)&\stackrel{\rm def}{=}&(-1)^{n+1}\left\{\phi(s)-\sum_{k=0}^{n}\frac{\phi_k}{k!}(-s)^k\right\},\quad n\ge 0,\label{phi1}\\
\widehat{\phi}_n(s)&\stackrel{\rm def}{=}&\phi_n(s)/s^{n+1},\quad n\ge 0.\label{phi2}
\end{eqnarray}

It follows that if $\phi_n<\infty$, then $\lim_{s\downarrow 0}\widehat{\phi}_{n-1}(s)=\phi_n/n!$ and
$s\widehat \phi_{n}(s)=\phi_{n}/n!-\widehat \phi_{n-1}(s)$ for $n\ge 1$.

In addition, if $\phi_n<\infty$, we define a sequence of functions $F_k$ recursively by:
$F_1(t)=F(t)$ and $1-F_{k+1}(t) \stackrel{\rm def}{=} \int_t^{\infty}(1-F_k(x))dx$, $k=1,2,\ldots,n$.
It is not difficult to check that $1-F_{k+1}(0)=\phi_k/k!$ and $\int_0^t(1-F_k(x))dx$ has the LST $\widehat \phi_{k-1}(s)$.
Namely, for $k=1,2,\ldots,n$,
\begin{eqnarray}
\int_0^{\infty}e^{-st}(1-F_{k}(t))dt &=& \frac {\phi_{k-1}} {(k-1)!}\frac 1 s-\frac 1 s\int_0^{\infty}e^{-st}dF_{k}(t)\nonumber\\
&=&\frac {\phi_{k-1}} {(k-1)!}\frac 1 s-\frac 1 s\int_0^{\infty}e^{-st}(1-F_{k-1}(t))dt \nonumber\\
&=&\frac {\phi_{k-1}} {(k-1)!}\frac 1 s-\frac {\phi_{k-2}} {(k-2)!}\frac 1 {s^2}+\cdots+(-1)^{k-1} \frac 1 {s^k}+(-1)^{k}\frac 1 {s^k}\int_0^{\infty}e^{-st}dF_{1}(t) \nonumber\\
&=&\widehat \phi_{k-1}(s).\label{laplace-phi(t)}
\end{eqnarray}

\begin{lemma}[pp.333-334 in \cite{Bingham:1989}]
\label{Cohen}
Assume that $n<d<n+1$, $n\in\{0,1,2,\ldots\}$. Then, the following are equivalent:
\begin{eqnarray}
1-F(t) &\sim&  t^{-d} L(t),\quad t\to\infty,\\
\phi_n(s) &\sim& \big[\Gamma(d-n)\Gamma(n+1-d)/\Gamma(d)\big]\cdot s^{d}L(1/s),\quad s\downarrow 0.
\end{eqnarray}
\end{lemma}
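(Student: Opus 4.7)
The plan is to reduce this equivalence to Karamata's Abelian--Tauberian theorem for Laplace transforms of regularly varying functions, applied to the iterated tail functions $1-F_k$ introduced before the statement. The identity (\ref{laplace-phi(t)}) rewrites $\widehat{\phi}_n(s)=\int_0^\infty e^{-st}(1-F_{n+1}(t))\,dt$, so $\phi_n(s)=s^{n+1}\widehat{\phi}_n(s)$. Since $G(t):=\int_0^t(1-F_{n+1}(x))\,dx$ is nondecreasing and $\widehat{\phi}_n(s)=\int_0^\infty e^{-st}\,dG(t)$, the target reduces (via the standard Karamata Tauberian theorem, valid because $n+1-d\in(0,1)$ and $G$ is monotone) to the equivalence
\[
1-F(t)\sim t^{-d}L(t)\quad\Longleftrightarrow\quad G(t)\sim \frac{\Gamma(d-n)\Gamma(n+1-d)}{\Gamma(d)\,\Gamma(n+2-d)}\,t^{n+1-d}L(t).
\]

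For the forward direction, the hypothesis $n<d<n+1$ guarantees $\phi_k<\infty$ for $k\le n$ while $\phi_{n+1}=\infty$, so every iterated tail integral in (\ref{laplace-phi(t)}) converges. I would apply Karamata's integration theorem on p.~28 of \cite{Bingham:1989} to $1-F_{k+1}(t)=\int_t^\infty(1-F_k(x))\,dx$ by induction to obtain
\[
1-F_k(t)\sim\frac{1}{(d-1)(d-2)\cdots(d-k+1)}\,t^{-(d-k+1)}L(t),\qquad k=1,2,\ldots,n+1,
\]
each step valid because the index $-(d-k+1)<-1$ for $k\le n$ and $-(d-n)\in(-1,0)$ for $k=n+1$. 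A final Karamata integration yields $G(t)\sim \bigl[(n+1-d)(d-1)\cdots(d-n)\bigr]^{-1} t^{n+1-d}L(t)$. Using the identities $\Gamma(n+2-d)=(n+1-d)\Gamma(n+1-d)$ and $(d-1)(d-2)\cdots(d-n)=\Gamma(d)/\Gamma(d-n)$, and passing back through $\phi_n(s)=s^{n+1}\widehat{\phi}_n(s)$ via the Abelian direction of Karamata's Tauberian theorem, one obtains exactly the constant $\Gamma(d-n)\Gamma(n+1-d)/\Gamma(d)$.

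For the reverse direction, I would begin with $\phi_n(s)\sim K s^d L(1/s)$ where $K=\Gamma(d-n)\Gamma(n+1-d)/\Gamma(d)$, restate it as $\widehat{\phi}_n(s)\sim K s^{-(n+1-d)}L(1/s)$, and apply the Tauberian half of Karamata's theorem (legitimate because $G$ is monotone) to conclude $G(t)\sim K\Gamma(n+2-d)^{-1} t^{n+1-d}L(t)$. The monotone density theorem of Bingham--Goldie--Teugels, applicable because $1-F_{n+1}$ is nonincreasing, then gives $1-F_{n+1}(t)\sim(n+1-d)\,G(t)/t$, i.e.\ regular variation with index $-(d-n)$. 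Iterating the ``tail'' form of the monotone density theorem $n$ more times along the chain $1-F_{k+1}(t)=\int_t^\infty(1-F_k(x))\,dx$, each $1-F_k$ being nonincreasing by construction, I propagate regular variation up through $1-F_n,\,1-F_{n-1},\ldots,1-F_1=1-F$, and the Gamma-function factors telescope to produce $1-F(t)\sim t^{-d}L(t)$.

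The main obstacle is bookkeeping the constants through the cascade of $n$ integration (forward) or differentiation (reverse) steps and verifying the hypotheses of the monotone density theorem at each link of the reverse chain. Monotonicity of each $1-F_k$ is free from the construction, and regular variation of positive index at each intermediate stage guarantees the divergence/convergence needed to apply Karamata at the right form; the boundary values $1-F_k(0)=\phi_{k-1}/(k-1)!$ are finite because $\phi_n<\infty$, so they do not affect the tail assertion. Otherwise the argument is the standard Abelian--Tauberian circuit, which is exactly the content cited from pp.~333--334 of \cite{Bingham:1989}.
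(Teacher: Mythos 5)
The paper does not prove Lemma~\ref{Cohen}; it is cited verbatim from Bingham, Goldie, and Teugels (pp.~333--334), so there is no internal proof to compare against. Your blind reconstruction, however, is the standard Abelian--Tauberian argument for this precise result: the reduction to the monotone integral $G(t)=\int_0^t(1-F_{n+1}(x))\,dx$ via the identity $\widehat{\phi}_n(s)=\int_0^\infty e^{-st}\,(1-F_{n+1}(t))\,dt$, the forward cascade of $n$ Karamata integrations ``from infinity'' (each valid because $-(d-k+1)<-1$ for $k\le n$) plus one final ``from zero'' integration (index $-(d-n)\in(-1,0)$), Karamata's Tauberian theorem applied to the monotone $G$, and the reverse cascade driven by the monotone density theorem are exactly the ingredients that the cited text uses. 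The constant bookkeeping also checks out: $G(t)\sim \bigl[(n+1-d)(d-1)\cdots(d-n)\bigr]^{-1}t^{n+1-d}L(t)$, Karamata inserts a factor $\Gamma(n+2-d)=(n+1-d)\Gamma(n+1-d)$, and $(d-1)\cdots(d-n)=\Gamma(d)/\Gamma(d-n)$ reduces the coefficient to $\Gamma(d-n)\Gamma(n+1-d)/\Gamma(d)$ as stated. One minor caveat worth noting, but which does not affect the argument: for $n=0$ the ``forward cascade of Karamata integrations'' is empty, and one passes directly from $1-F_1=1-F$ to $G$; your language covers this case, but it is good to be explicit that the $k\le n$ steps vanish when $n=0$.
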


\begin{definition}[e.g., Bingham et al. (1989) \cite{Bingham:1989}, p.128]
A function $F:(0,\infty)\to (0,\infty)$ belongs to the de Haan class $\Pi$ at $\infty$ if there exists a function $H:(0,\infty)\to (0,\infty)$ such that
$\lim_{t\uparrow \infty}\frac {F(xt)-F(t)}{H(t)} = \log x$ for all $x>0$,
where the function $H$ is called the auxiliary function of $F$.
\end{definition}

\begin{lemma}
\label{Lemma 4.5new}
Assume that $n\in\{1,2,\ldots\}$. Then, the following two statements are equivalent:
\begin{eqnarray}
&&1-F(t)\sim t^{-n}L(t),\quad t\to\infty; \label{deHaan3}\\
&&\lim_{s\downarrow 0}\frac {\widehat{\phi}_{n-1}(xs)-\widehat{\phi}_{n-1}(s)}{L(1/s)/(n-1)!}=-\log x, \quad\mbox{ for all }x>0.\label{deHaan4}
\end{eqnarray}
\end{lemma}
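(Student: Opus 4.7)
The plan is to reduce the equivalence to a single Tauberian theorem for the de Haan class $\Pi$ applied to the $n$-fold iterated equilibrium tail. By the chain of identities recorded in (\ref{laplace-phi(t)}), $\widehat{\phi}_{n-1}(s)$ is the Laplace transform of the non-increasing tail $1 - F_n(t)$, so (\ref{deHaan4}) is really a statement about the local behavior of this transform near $s=0$, while (\ref{deHaan3}) is a statement about the tail of $1 - F_1 = 1-F$ at infinity. The two are linked by $n-1$ successive integrations, and these integrations are (essentially) invertible via a monotone-density argument.

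For the forward direction $(\ref{deHaan3}) \Rightarrow (\ref{deHaan4})$, I would apply Karamata's theorem (BGT p.28) iteratively to the recursion $1 - F_{k+1}(t) = \int_t^\infty (1-F_k(u))du$. Starting from $1-F(t) \sim t^{-n}L(t)$, each integration reduces the exponent by one and inserts a factor $1/k$, yielding, after $n-1$ steps,
\[
1 - F_n(t) \sim \frac{L(t)}{(n-1)!}\cdot t^{-1},\quad t\to\infty.
\]
At this critical exponent, pure Karamata theory no longer gives a finite limit for the Laplace transform itself, but the de Haan-type Tauberian theorem (BGT Theorem~3.9.1) does apply: for a non-increasing $\bar H$ with $\bar H(t)\sim c\,t^{-1}L(t)$, its Laplace transform $\hat h(s)=\int_0^\infty e^{-st}\bar H(t)dt$ satisfies $\hat h(xs)-\hat h(s)\sim -c\,L(1/s)\log x$ as $s\downarrow 0$. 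Applied with $\bar H=1-F_n$ and $c=1/(n-1)!$, this is exactly (\ref{deHaan4}).

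For the converse $(\ref{deHaan4}) \Rightarrow (\ref{deHaan3})$, the Tauberian half of BGT Theorem~3.9.1, whose monotonicity hypothesis is automatic for $1-F_n$, converts (\ref{deHaan4}) back into $1-F_n(t) \sim t^{-1}L(t)/(n-1)!$. One then walks back along the chain $1-F_n,\,1-F_{n-1},\ldots,1-F$ via $-(d/dt)(1-F_{k+1}(t)) = 1-F_k(t)$; since each $1-F_k$ is non-increasing, the monotone density theorem (BGT Theorem~1.7.2) legitimizes the differentiation of a regularly varying tail, and a short induction shows that the slowly varying factor $L$ is preserved with exactly the combinatorial constant expected, recovering $1-F(t)\sim t^{-n}L(t)$.

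The main obstacle is the reverse passage: integer exponents are precisely the critical case where Karamata's theorem degenerates and one has to work in the de Haan class, so the Tauberian half of BGT Theorem~3.9.1 must be invoked with care and its hypotheses (non-increasingness of $1-F_n$, measurability, slowly-varying auxiliary $L(1/s)$) verified explicitly at each step. The need for this separate treatment, rather than reuse of Lemma~\ref{Cohen}, is exactly that the exponent $d=n$ here is an integer, placing the problem just outside the scope of the non-integer case handled there.
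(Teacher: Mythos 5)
Your proposal is correct and follows essentially the same route as the paper: reduce to the $n=1$ equilibrium tail $1-F_n(t)\sim t^{-1}L(t)/(n-1)!$ via iterated Karamata/monotone-density, recognize $\widehat{\phi}_{n-1}$ as the Laplace transform of $1-F_n$ (equivalently the LST of $\int_0^t(1-F_n)$), and invoke the de Haan Tauberian theorem (BGT Theorem~3.9.1) together with the $\Pi$-class characterization to pass between the two sides. The only cosmetic difference is that you fold the ``$1-F_n(t)\sim t^{-1}L(t)/(n-1)!\Leftrightarrow\int_0^t(1-F_n)\in\Pi_{L/(n-1)!}$'' step into your statement of the Tauberian theorem, whereas the paper cites it separately (p.335 of BGT).
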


\proof
Recall the definition of $F_{k}(t)$. Repeatedly using Karamata's theorem (p.27 in \cite{Bingham:1989}) and the monotone density theorem (p.39 in \cite{Bingham:1989}),
we know that $1-F(t)\sim t^{-n}L(t)$ is equivalent to $1-F_n(t)\sim t^{-1}L(t)/(n-1)!$, which in turn is equivalent to
$\int_0^t(1-F_n(x))dx\in \Pi$ with an auxiliary function that can be taken as $L(t)/(n-1)!$ (see, e.g., p.335 in \cite{Bingham:1989}).
By (\ref{laplace-phi(t)}),  $\int_0^t(1-F_n(x))dx$ has the LST $\widehat{\phi}_{n-1}(s)$. Applying Theorem~3.9.1 in \cite{Bingham:1989} (pp.172--173), we complete the proof.
\pend

\end{document}